\numberwithin{equation}{section}
\theoremstyle{plain}
\newtheorem{theorem}{Theorem}[section]
\newtheorem{lemma}[theorem]{Lemma}
\newtheorem{proposition}[theorem]{Proposition}
\newtheorem{corollary}[theorem]{Corollary}
\theoremstyle{definition}
\newtheorem{definition}[theorem]{Definition}
\newtheorem{remark}[theorem]{Remark}
\newtheorem{example}[theorem]{Example}
\newtheorem*{KM}{Krein-Milman theorem}
\DeclareMathOperator{\N}{\mathbb{N}}
\DeclareMathOperator{\Q}{\mathbb{Q}}
\DeclareMathOperator{\R}{\mathbb{R}}
\DeclareMathOperator{\St}{St}
\DeclareMathOperator{\Aff}{Aff} 
\newcommand{\calP}{\mathcal{P}}
\newcommand{\calL}{\mathcal{L}}
\newcommand{\frakm}{\mathfrak{m}}
\newcommand{\frakn}{\mathfrak{n}}
\newcommand{\ba}{\mathbf{a}}
\newcommand{\bb}{\mathbf{b}}
\renewcommand{\epsilon}{\varepsilon}
\providecommand{\abs}[1]{\lvert#1\rvert}
\providecommand{\norm}[1]{\lVert#1\rVert}
\DeclareMathOperator{\Rad}{Rad}
\DeclareMathOperator{\Max}{Max}
\DeclareMathOperator{\ext}{ext}
\DeclareMathOperator{\id}{id}
\DeclareMathOperator{\conv}{co}
\begin{document}

\title[Ordered group-valued probability]{Ordered group-valued probability, positive operators, and integral representations}

\author[T. Kroupa]{Tom\'a\v{s} Kroupa}
\address{The Czech Academy of Sciences\\ Institute of Information Theory and Automation\\\ Prague, Czech Republic}
\urladdr{http://staff.utia.cas.cz/kroupa/}
 \email{kroupa@utia.cas.cz}   

\thanks{The work on this paper has been supported from the GA\v{C}R grant project GA17-04630S}

\subjclass{06D35, 97H50, 47H07}
\keywords{MV-algebra, Abelian $\ell$-group, Riesz space, Positive operator}

\begin{abstract}
Probability maps are additive and normalised maps taking values in the  unit interval of a  lattice ordered Abelian group. They appear in  theory of affine representations and they are also a semantic counterpart of H\'ajek's probability logic. In this paper we obtain a~correspondence between probability maps and positive operators of~certain Riesz spaces, which extends the well-known representation theorem of~real-valued MV-algebraic states by positive linear functionals. When the codomain algebra contains all continuous functions, the set of~all probability maps is convex, and we prove that its extreme points coincide with homomorphisms. We also show that probability maps can be viewed as a collection of states indexed by maximal ideals of a codomain algebra, and we characterise this collection in special cases.
\end{abstract}

\maketitle

\section{Introduction}\label{sec:Intro}
States are real-valued functions on MV-algebras (or, equivalently, unital lattice ordered Abelian groups), which are counterparts of real-valued probability measures of Boolean algebras. See \cite{FlaminioKroupa15,Mundici11} for a detailed treatment of~states and their relation to ordered algebras, mathematical logic, and probability theory. In this paper we make an attempt at a systematic study of \emph{probability maps}, that is, states taking values in more general ordered groups than $\R$, with regard to their representation by positive operators of certain Riesz spaces. The achieved results generalise important theorems about real-valued states, such as their correspondence to positive linear functionals and  the integral representation theorem. When compared with \cite{Boccuto2017}, we allow for a~more general codomain of probability maps than Riesz MV-algebras. The~assumption of Dedekind completeness is frequently adopted when working with the~codomain of states or positive operators. However, in this paper it is used 
 only in Theorem \ref{thm:intvecmeas}.

The research on states is related to a number of interesting topics in~algebraic logic, ordered groups, and probability on MV-algebras. Let us mention just a few of them as a motivation for this paper and as a way of recalling previous results obtained in this area. Our list is by no means exhaustive.
\begin{description}
\item[\emph{Generalised states}] In an effort to develop an algebraic semantics (in the~ varietal sense) for H\'ajek's probability logic \cite[Chapter 8.4]{Hajek98}, the author and Marra introduced the notion of generalised state and investigated the associated two-sorted variety of algebras \cite{KroupaMarraSC17}. This approach opens space for an equational treatment of probability and makes it possible to apply universal constructions in this field.
\item[\emph{Affine representations}] The classical theory of affine representations is used to classify certain types of ordered Abelian groups; see \cite{Goodearl86}. The natural evaluation map from Example \ref{ex:affine}, which sends the universe of~algebra into the set of affine continuous functions on the state space, is a~prototypical example of a~probability map since it is additive and positive, but it does not preserve lattice operations. Affine representations of~MV-algebras are investigated in~\cite{Boccuto2017}.
\item[\emph{Non-archimedean probability}] Probability measures with co\-do\-mains more general than $[0,1]$  can be considered naturally; see \cite[Example~1]{KroupaMarraSC17}. It is well-known that the set of all positive integers $\N$ admits no uniform probability distribution in the classical sense. On the other hand, there is a uniform probability map on the finite-cofinite Boolean algebra~$\mathcal{B}$ over $\N$. Assume the codomain of a probability map is Chang's algebra $C\coloneqq \{0,\varepsilon,2\varepsilon,\dots,1-2\varepsilon,1-\varepsilon,1\}$, where the symbol $\varepsilon$ represents the smallest infinitesimal greater than $0$.  Then  $p\colon \mathcal{B}\to C$ defined as $p(A)\coloneqq \abs{A}\varepsilon$, when $A$ is finite, and $p(A)\coloneqq 1-\abs{\bar{A}}\varepsilon$, if $A$ is cofinite, is a probability map. It can be thought of as a ``uniform distribution'' over $\N$ since $p(\{n\})=\varepsilon$ for each $n\in \N$.
\end{description}

The mathematical tools used in this paper are not only algebraic, but they also involve  convexity and infinite-dimensional geometry, which is dictated by the rich structure we are dealing with (spaces of~positive operators). This wide arsenal of techniques makes it possible to draw a parallel between certain ordered algebras and their positive maps, which is witnessed by some of the main results of this paper such as Theorem \ref{thm:extoperator}.

\begin{table}[htp]
\begin{center}
\begin{tabular}{ll}
\textbf{Structure} & \textbf{Map}  \\ \hline
MV-algebra & Probability map \\
Unital Abelian $\ell$-group & Unital positive group homomorphism \\
Riesz space $C_{\R}(X)$ & Unital positive linear operator
\end{tabular}
\end{center}
\label{Tab}
\caption{}
\end{table}%
\noindent
It is well-known that the class of MV-algebras is equational, whereas unital Abelian $\ell$-groups do not form a variety of algebras. Interestingly enough, the~positivity and the additivity conditions of group homomorphisms take form of two simple equational laws in case of probability maps between MV-algebras; cf. Definition \ref{def:genstate} and Lemma \ref{lem:extgroup}.

The article is structured as follows. We fix our notation and terminology in~Section \ref{sec:Prelim}. In particular, we repeat basic facts about MV-algebras and~positive operators between Riesz spaces, which form a necessary background for dealing with real-valued states of MV-algebras and their generalisations. Throughout the paper we use the apparatus of compact convex sets and affine continuous maps; the reader is referred to Appendix \ref{sec:coco} for summary. Our main results are formulated in Section \ref{sec:PM}.
\begin{itemize}
\item Theorem \ref{thm:extoperator} says that there is a one-to-one correspondence between probability maps and positive operators.
\item Corollary \ref{thm:extremestate} identifies extreme points of the convex set of all probability maps when the codomain is the algebra of all continuous functions.
\item Theorem \ref{thm:intvecmeas} provides sufficient conditions for an integral representation of a~probability map by a vector measure.
\end{itemize}
In Section \ref{sec:Adj} we formulate a framework for studying dual probability maps, which can be fully described in certain simple cases. The paper concludes with an outlook towards further research (Section \ref{sec:conc}).

\section{Preliminaries}\label{sec:Prelim}
First, we recall basic definitions and facts about MV-algebras \cite{CignoliOttavianoMundici00,Mundici11}. Then we discuss positive operators between Riesz spaces of continuous functions, which is instrumental in parts of theory of MV-algebraic states \cite{Mundici95,FlaminioKroupa15}.
\subsection{MV-algebras}
We follow the notation and terminology of \cite{CignoliOttavianoMundici00}, to which we refer the reader for background. Given an MV-algebra $(M,\oplus,\neg,0)$, we define as usual the~binary
operation $\odot$ and the constant $1$ together with the lattice
supremum $\vee$ and infimum~$\wedge$ on $M$. The lattice order of $M$ is
denoted by $\leq$.
\begin{example}[Standard MV-algebra]
Let $[0,1]$ be the real unit interval equipped with the operations $a\oplus b \coloneqq \min(a+b,1)$, $\neg a \coloneqq 1-a$, and the~constant $0$. This structure is an MV-algebra and its order $\leq$ coincides with the~usual total order of the real unit interval $[0,1]$.
\end{example}

\begin{example}[MV-algebra of continuous functions]\label{ex:MVcont}
Let $X$ be a compact Hausdorff topological space and $C(X)$ be the collection of all continuous functions $X\to [0,1]$. Since $\oplus$ and $\neg$ are continuous functions, $C(X)$ becomes an MV-algebra equipped with the operations of the product MV-algebra $[0,1]^{X}$. The MV-algebra $C(X)$ is \emph{separating}: For every $x,y\in X$ with $x\neq y$, there is an element $a\in C(X)$ such that $a(x)\neq a(y)$.
\end{example}

\noindent
By McNaughton theorem \cite{McNaughton51} the free MV-algebra over $n$ generators coincides with the algebra of certain continuous functions defined on the hypercube $[0,1]^{n}$.
\begin{example}[The free MV-algebra]\label{ex:FMV}
Let $n\in \N$. We call $a\colon [0,1]^{n}\to [0,1]$ a \emph{McNaughton function} if $a$ is continuous and piecewise-linear, where each linear piece has integer coefficients only. Let $F_{\mathrm{MV}}(n)$ be the MV-algebra of~all $n$-variable McNaughton functions, where $F_{\mathrm{MV}}(n)$ is considered as an MV-subalgebra of $C([0,1]^{n})$. Then $F_{\mathrm{MV}}(n)$ is precisely the free $n$-generated MV-algebra. Loosely speaking,  McNaughton functions $[0,1]^{n}\to [0,1]$ are many-valued generalisations of Boolean functions $\{0,1\}^{n}\to \{0,1\}$.
\end{example}

An \emph{ideal} of an MV-algebra $M$ is a subset $\mathfrak{i}$ of $M$ containing $0$, closed with respect to $\oplus$, and if $a\in \mathfrak{i}$, $b\in M$ and $b\leq a$, then $b \in \mathfrak{i}$.
The notions of~MV-homomorphism and quotient are defined as usual. By $\Max M$ we denote the nonempty compact Hausdorff topological space of maximal ideals $\frakm$ of $M$. For any MV-algebra $M$, let $\Rad M$ be the \emph{radical} of $M$,
\[
\Rad M \coloneqq \bigcap \Max M.
\]
Always $0\in \Rad M$. Nonzero elements of $\Rad M$ are called \emph{infinitesimals}. An~MV-algebra $M$ is said to be \emph{semisimple} if it has no infinitesimals, that is, $\Rad M=\{0\}$.

\begin{theorem}[{\cite[Theorem 4.16]{Mundici11}}]\label{thm:Holder}
Let $M$ be an MV-algebra. 
\begin{enumerate}
\item For any $\frakm\in \Max M$ there exist a unique MV-subalgebra $I_{\frakm}$ of $[0,1]$ together with a unique MV-isomorphism \[h_{\frakm}\colon A/ \frakm \to I_{\frakm}.\]
\item Let the map $a\in M \mapsto a^{*}\in [0,1]^{\Max M}$ be defined by 
\[
a^{*}(\frakm) \coloneqq h_{\frakm}(a/ \frakm), \quad \frakm \in \Max M.
\]
Then the map $^{*}$ is an MV-homomorphism of $M$ onto a separating MV-subalgebra $M^{*}$ of $C(\Max M)$. Moreover, the map $^{*}$ is an MV-isomorphism onto $M^{*}$ if, and only if, $M$ is semisimple.
\end{enumerate}
\end{theorem}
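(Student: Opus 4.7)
The plan is to reduce part (1) to H\"older's classical theorem via Mundici's categorical equivalence $\Gamma$, and then to obtain part (2) by arguing pointwise, with continuity handled through the hull-kernel topology on $\Max M$.

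For part (1), maximality of $\mathfrak{m}$ implies that the quotient $M/\mathfrak{m}$ is a simple MV-algebra. Through $\Gamma$, simple MV-algebras correspond to simple unital Abelian $\ell$-groups, that is, totally ordered Archimedean groups with a distinguished strong unit. The classical H\"older theorem furnishes a unique unit-preserving $\ell$-group embedding of any such group into $(\R,1)$. Applying $\Gamma$ in reverse produces the desired MV-embedding $h_{\mathfrak{m}}\colon M/\mathfrak{m} \to [0,1]$, whose image is the unique MV-subalgebra $I_{\mathfrak{m}}$; uniqueness of both $I_{\mathfrak{m}}$ and $h_{\mathfrak{m}}$ is inherited from the uniqueness statement in H\"older's theorem.

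For part (2), preservation of $\oplus$, $\neg$, and $0$ by $^{*}$ is immediate: at each fixed $\mathfrak{m}$, the assignment $a\mapsto h_{\mathfrak{m}}(a/\mathfrak{m})$ is the composition of an MV-homomorphism with an MV-isomorphism, so $^{*}$ is an MV-homomorphism from $M$ into $[0,1]^{\Max M}$. The crucial point is showing that $a^{*} \in C(\Max M)$. Endowing $\Max M$ with the hull-kernel topology, whose closed base is $\{V(b) : b \in M\}$ with $V(b)=\{\mathfrak{m}: b\in \mathfrak{m}\}$, one verifies that preimages under $a^{*}$ of closed intervals of $[0,1]$ are closed, by translating the inequalities $a^{*}(\mathfrak{m}) \leq r$ and $a^{*}(\mathfrak{m}) \geq r$ into membership in $\mathfrak{m}$ of suitable MV-terms built from $a$ using iterated $\oplus$ and $\neg$ (this is where the rationality of $r$ is used and then density extends to all of $[0,1]$). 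Separation then follows because for distinct $\mathfrak{m}\neq \mathfrak{n}$ in $\Max M$ there exists $a$ lying in exactly one of them, whence $a^{*}(\mathfrak{m})\neq a^{*}(\mathfrak{n})$.

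Finally, the kernel of $^{*}$ equals $\{a \in M : a \in \mathfrak{m} \text{ for every } \mathfrak{m}\in \Max M\} = \Rad M$, so $^{*}$ is an MV-isomorphism onto $M^{*}$ precisely when $\Rad M=\{0\}$, i.e., when $M$ is semisimple. The main obstacle I anticipate is the continuity verification in part (2), where the explicit MV-algebraic translation of the order relations on $a^{*}$ must be carried out with care; the remaining steps reduce either to an appeal to H\"older's theorem or to routine bookkeeping under the $\Gamma$-equivalence.
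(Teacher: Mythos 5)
The paper does not prove this statement; it is quoted verbatim from the literature (Mundici, \emph{Advanced \L ukasiewicz Calculus and MV-algebras}, Theorem 4.16) and used as a black box. Your sketch correctly reconstructs the standard argument from that source --- reduction of part (1) to H\"older's theorem via the $\Gamma$ equivalence (a simple MV-algebra corresponds to a totally ordered Archimedean unital $\ell$-group, which embeds uniquely and unit-preservingly into $\R$), and for part (2) the componentwise verification of the homomorphism identities, continuity of $a^{*}$ in the hull--kernel topology via rational level sets expressed by MV-terms, separation from the existence of an element in exactly one of two distinct maximal ideals, and the identification of $\ker({}^{*})$ with $\Rad M$ --- so it is in essence the same proof as the one the paper relies on.
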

\noindent
The first part of Theorem \ref{thm:Holder} is an MV-algebraic variant of H\"older's theorem.
For any MV-algebra $M$, the quotient algebra $M/ \Rad M$ is semisimple, and~the maximal ideal spaces $\Max M$ and $\Max (M/ \Rad M)$ are homeomorphic; see \cite[Lemma 4.20]{Mundici11}.

Let $(G,u)$ be a unital lattice ordered Abelian group \cite{Goodearl86} (unital $\ell$-group, for short), where $u$ is an order unit. Put $\Gamma(G) \coloneqq [0,u]$, where $0$ is the neutral element of $G$ and $[0,u]$ denotes the corresponding order interval in $G$.  For all $a,b\in \Gamma(G)$ we define
\[
a\oplus b \coloneqq (a+b) \wedge u, \qquad \neg a \coloneqq u-a,
\]
where $+,\wedge$, and $-$ are the operations of $G$. Then $(\Gamma(G),\oplus,\neg,0)$ becomes an MV-algebra. 
Mundici's theory of $\Gamma$ functor \cite[Chapter 7]{CignoliOttavianoMundici00} says that $\Gamma(G)$ is  the most general example of an MV-algebra. Specifically, for every MV-algebra $M$ there is a unique unital $\ell$-group $\Xi(M)$ such that $M$ is isomorphic to $\Gamma(\Xi(M))$.

We define $a\ominus b\coloneqq a\odot \neg b$, for all $a,b\in M$. In Section \ref{sec:PM} we make an~ample use of several MV-algebraic identities:
\begin{enumerate}[label=(MV\arabic*),leftmargin=1.5\parindent]
\item \label{MV1} $b\wedge \neg a=b\ominus (a\odot b)$
\item \label{MV2}	$a\oplus b=a\oplus (b\wedge \neg a)$
\item\label{MV3}	$a\odot (b\wedge \neg a)=0$
\end{enumerate} 
In particular,  \ref{MV2} and \ref{MV3}  say that any element in $M$ of the form $a\oplus b$ can be additively cut into two disjoint parts, $a$ and $b\wedge \neg a$. 
In order to show that \ref{MV1} is true, recall that, by the definition of $\wedge$ in $M$, we have 
\[
b\wedge \neg a =b\odot (\neg b \oplus \neg a)=b\odot \neg (a\odot b)=b\ominus (a\odot b).
\]
 By \cite[Proposition 1.1.6]{CignoliOttavianoMundici00} the~ operation~$\oplus$ distributes over $\wedge$, 
\[
a\oplus (b\wedge \neg a) = (a\oplus b) \wedge (a\oplus \neg a)=a\oplus b, 
\]
which proves \ref{MV2}. Finally, \ref{MV3} holds,
\[
a\odot (b\wedge \neg a)=a\odot (b\ominus (a\odot b))=(a\odot b) \odot \neg (a\odot b)=0.
\]

\subsection{Positive operators}
The standard reference for Banach lattices of continuous functions and their positive operators is \cite{Schaefer74}. Positive operators between Riesz spaces are thoroughly studied also in \cite{AliprantisBurkinshaw06,Zaanen12}. 

Let $X$ and $Y$ be compact Hausdorff spaces. By $C_{\R}(X)$ we denote the~Banach lattice of all continuous functions $a\colon X\to \R$, endowed with the~ supremum norm~$\norm{.}$, where \[\norm{a}\coloneqq \sup\; \{\abs{a(x)} \mid x\in X\}, \qquad a\in C_{\R}(X).\] A \emph{unital positive linear operator} is a linear mapping $L\colon C_{\R}(X)\to C_{\R}(Y)$ that is \emph{positive} ($L(a)\geq 0$ for all $a\geq 0$) and \emph{unital} ($L(1)=1$). Any such $L$ is necessarily a \emph{monotone} mapping ($a\leq b$ implies $L(a)\leq L(b)$) and a~\emph{continuous} linear operator (there is some $C>0$ such that $\norm{L(a)}\leq C\norm{a}$ for all $a\in C_{\R}(X)$). Let 
\(
\calL^{+}(C_{\R}(X),C_{\R}(Y))
\)
be the set of all unital positive linear operators $C_{\R}(X)\to C_{\R}(Y)$. Then \(
\calL^{+}(C_{\R}(X),C_{\R}(Y))
\)
is clearly a convex subset of the linear space of all maps $C_{\R}(X)\to C_{\R}(Y)$. Ellis-Phelps theorem (see \cite{Ellis64,Phelps63}) then describes the extreme boundary of $\calL^{+}(C_{\R}(X),C_{\R}(Y))$. We use a formulation of this result from \cite[Theorem III.9.1]{Schaefer74}.

\begin{theorem}\label{thm:extremeoperator}
Let $L\in \calL^{+}(C_{\R}(X),C_{\R}(Y))$. Then the following are equivalent:
\begin{enumerate}
\item $L$  is an extreme point of $\calL^{+}(C_{\R}(X),C_{\R}(Y))$.
\item $L$ is a lattice homomorphism.
\item There is a unique continuous map $f\colon Y\to X$ such that $L(a)=a\circ f$, for all $a\in C_{\R}(X)$.
\end{enumerate}
\end{theorem}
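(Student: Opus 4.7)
The plan is to establish the equivalence $(2)\Leftrightarrow(3)$ via the classical point--ideal duality for $C_\R(X)$, the implication $(3)\Rightarrow(1)$ via extremality of Dirac measures, and the harder converse $(1)\Rightarrow(2)$ via a perturbation argument; the last is where I expect the main obstacle.

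For $(3)\Rightarrow(2)$, lattice operations in $C_\R(X)$ and $C_\R(Y)$ are pointwise, so composition with $f$ preserves them; this is immediate. For $(2)\Rightarrow(3)$, I would fix $y\in Y$ and consider $\phi_y\colon a\mapsto L(a)(y)$, a unital positive lattice homomorphism $C_\R(X)\to \R$. Its kernel is a maximal $\ell$-ideal of $C_\R(X)$, and the classical identification of maximal ideals of $C_\R(X)$ with points of the compact Hausdorff space $X$ yields a unique $f(y)\in X$ with $\phi_y=\mathrm{ev}_{f(y)}$, so $L(a)=a\circ f$. Continuity of $f$ follows because $a\circ f=L(a)$ is continuous for every $a\in C_\R(X)$ and $C_\R(X)$ induces the topology of $X$; uniqueness is by separation.

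For $(3)\Rightarrow(1)$, assume $L=\tfrac12(L_1+L_2)$ with $L_i\in\calL^+(C_\R(X),C_\R(Y))$. Evaluation at any $y\in Y$ yields a midpoint decomposition $\delta_{f(y)}=\tfrac12(\mu_{y,1}+\mu_{y,2})$ of the Dirac measure in the probability measures on $X$, where each $\mu_{y,i}$ represents $a\mapsto L_i(a)(y)$ via the Riesz theorem. Extremality of Dirac measures forces $\mu_{y,1}=\mu_{y,2}=\delta_{f(y)}$, and hence $L_1=L_2=L$.

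The core implication $(1)\Rightarrow(2)$ I would prove by contraposition. If $L$ is not a lattice homomorphism, then equivalently $L(a^2)(y_0)>L(a)(y_0)^2$ for some $a\in C_\R(X)$ and $y_0\in Y$, since a positive linear functional on $C_\R(X)$ is a lattice homomorphism iff multiplicative iff a point evaluation. The main obstacle is promoting a pointwise splitting of $\phi_{y_0}$ to a decomposition of $L$ whose pieces still land in $C_\R(Y)$ rather than in mere bounded functions. My approach is to pick a continuous bump $h\in C_\R(Y)$ with $h(y_0)>0$ and a sufficiently small $\lambda>0$, and define
\[
L_{\pm}(c)\coloneqq L(c)\pm\lambda\, h\cdot\bigl(L(ac)-L(a)\,L(c)\bigr).
\]
Unitality holds because the bracketed term vanishes at $c=1$; positivity on $c\ge 0$ for small $\lambda$ follows from a pointwise Cauchy--Schwarz bound for the covariance form $(a,c)\mapsto L(ac)-L(a)L(c)$ applied to the positive functional $\phi_y$. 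The resulting decomposition $L=\tfrac12(L_++L_-)$ is nontrivial at $y_0$ by the choice of $a$, contradicting extremality.
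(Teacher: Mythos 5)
The paper does not prove this statement at all: it is the Ellis--Phelps theorem, imported verbatim from Schaefer \cite[Theorem III.9.1]{Schaefer74} (see also \cite{Ellis64,Phelps63}), so any proof you give is necessarily ``a different route.'' Your overall architecture is the standard one and is sound: $(2)\Leftrightarrow(3)$ by identifying unital lattice-homomorphism functionals on $C_{\R}(X)$ with point evaluations, $(3)\Rightarrow(1)$ by reducing to extremality of Dirac measures in $\Delta(X)$ after evaluating at each $y$, and $(1)\Rightarrow(2)$ by a multiplicative perturbation. The perturbation $L_{\pm}(c)=L(c)\pm\lambda\,h\cdot\bigl(L(ac)-L(a)L(c)\bigr)$ is exactly the right object: it is linear in $c$, unital, lands in $C_{\R}(Y)$ because all products are taken pointwise in $C_{\R}(Y)$, and is nontrivial at $y_0$ upon taking $c=a$.

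There is, however, one step whose stated justification does not work: positivity of $L_{\pm}$ does not follow from Cauchy--Schwarz for the covariance form. Cauchy--Schwarz gives, for $c\geq 0$,
\[
\bigl|\phi_y(ac)-\phi_y(a)\phi_y(c)\bigr|\;\leq\;\mathrm{Var}_y(a)^{1/2}\,\mathrm{Var}_y(c)^{1/2}\;\leq\;\mathrm{Var}_y(a)^{1/2}\,\norm{c}^{1/2}\,\phi_y(c)^{1/2},
\]
which is of order $\phi_y(c)^{1/2}$, not $\phi_y(c)$; when $\phi_y(c)$ is small relative to $\norm{c}$ (a bump function nearly annihilated by $\phi_y$), no uniform choice of $\lambda>0$ makes $\lambda h(y)$ times this bound dominated by $L(c)(y)$. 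The fix is elementary and should replace the Cauchy--Schwarz appeal: since the covariance form is invariant under $a\mapsto a+t\cdot 1$ and homogeneous in $a$, you may normalise $0\leq a\leq 1$ without destroying $\phi_{y_0}(a^2)>\phi_{y_0}(a)^2$. Then for $c\geq 0$ one has $0\leq\phi_y(ac)\leq\phi_y(c)$ and $0\leq\phi_y(a)\phi_y(c)\leq\phi_y(c)$, whence $\bigl|\phi_y(ac)-\phi_y(a)\phi_y(c)\bigr|\leq\phi_y(c)=L(c)(y)$, and any $0<\lambda\leq\norm{h}^{-1}$ yields $L_{\pm}(c)\geq(1-\lambda\abs{h})L(c)\geq 0$. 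With that substitution your proof is complete. Two further cosmetic points: in $(1)\Rightarrow(2)$ you should record why failure of the lattice-homomorphism property at some $y_0$ forces $\phi_{y_0}(a^2)>\phi_{y_0}(a)^2$ for some $a$ (lattice operations in $C_{\R}(Y)$ are computed pointwise, and a unital positive functional is a lattice homomorphism iff its representing measure is Dirac iff it is multiplicative, the last equivalence by polarisation of $\phi(a^2)=\phi(a)^2$); and the equivalence ``lattice homomorphism iff point evaluation'' should be stated for \emph{unital} positive functionals, as a general positive lattice-homomorphism functional is only a nonnegative multiple of an evaluation.
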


If $Y=\{y\}$, then $C_{\R}(Y)$ and $\R$ are identified, and any $F \in \calL^{+}(C_{\R}(X),\R)$ is called a \emph{unital positive linear functional}. The~ shorter notation $\calL^{+}(C_{\R}(X))$ will be used in place of $\calL^{+}(C_{\R}(X),\R)$. The space $\calL^{+}(C_{\R}(X))$ is endowed with the subspace weak$^{*}$-topology that is inherited from the dual space $C^{*}_{\R}(X)$ of~$C_{\R}(X)$. This makes $\calL^{+}(C_{\R}(X))$ into a compact convex subset of the locally convex Hausdorff space $C^{*}_{\R}(X)$. Let $\Delta(X)$ be the compact convex set of all regular Borel probability measures over $X$ (see Example \ref{ex:PMs}). Given a~probability measure $\mu\in \Delta(X)$, define
\[
F_{\mu}(a) \coloneqq \int_{X} a\; \mathrm{d}\mu, \qquad a\in C_{\R}(X),
\]
and observe that $F_{\mu}\in \calL^{+}(C_{\R}(X))$. Then Riesz theorem  can be formulated as follows. 
\begin{theorem}
The map $\Delta(X)\to \calL^{+}(C_{\R}(X))$ given by 
\[
\mu \mapsto F_{\mu}
\]
is an affine homeomorphism.
\end{theorem}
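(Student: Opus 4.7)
The plan is to verify in turn well-definedness, affineness, bijectivity, and bicontinuity of $\mu\mapsto F_\mu$; the surjectivity step is the substantive content -- it is the classical Riesz--Markov--Kakutani representation theorem -- while the remaining steps are essentially formal.

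First I would check that $F_\mu\in\calL^{+}(C_\R(X))$ for every $\mu\in\Delta(X)$: linearity of the Lebesgue integral gives linearity of $F_\mu$, positivity of $\mu$ gives positivity, and $\mu(X)=1$ gives $F_\mu(1)=1$. Affineness $F_{t\mu+(1-t)\nu}=tF_\mu+(1-t)F_\nu$ for $t\in[0,1]$ follows from linearity of the integral in the measure variable. Injectivity is then handled via Urysohn's lemma: if $F_\mu=F_\nu$, then for every closed $K\subseteq X$ and every open neighbourhood $U$ of $K$ a Urysohn function $a$ sandwiched between the indicators of $K$ and $U$ satisfies $\mu(K)\leq F_\mu(a)=F_\nu(a)\leq \nu(U)$; taking infimum over $U$ and using outer regularity yields $\mu(K)\leq \nu(K)$, and the symmetric argument plus inner regularity give $\mu=\nu$ on the Borel $\sigma$-algebra.

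The hard part is surjectivity. Given $F\in\calL^{+}(C_\R(X))$, the classical Carath\'eodory construction proceeds by setting, for each open $U\subseteq X$,
\[
\mu^{*}(U)\coloneqq \sup\{F(a)\mid a\in C_\R(X),\ 0\leq a\leq 1,\ \mathrm{supp}\, a\subseteq U\},
\]
extending to arbitrary subsets by outer regularity via $\mu^{*}(E)\coloneqq \inf\{\mu^{*}(U)\mid E\subseteq U,\ U\text{ open}\}$, and verifying that the restriction $\mu$ of $\mu^{*}$ to the Borel $\sigma$-algebra is a regular Borel probability measure, where unitality $F(1)=1$ forces $\mu(X)=1$. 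The identity $F(a)=\int_X a\;\mathrm{d}\mu$ then follows for nonnegative $a$ by approximating from below by simple functions built on super-level sets $\{a>t\}$ and sandwiching by Urysohn functions, and for general $a$ by writing $a=a^{+}-a^{-}$. This is the step that carries all the real analytic weight; the main obstacle is checking measurability of Borel sets with respect to $\mu^{*}$ and matching the integral against $F$ uniformly in $a$.

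Finally, the weak$^{*}$-topology on $\calL^{+}(C_\R(X))$ is by construction the topology of pointwise convergence on $C_\R(X)$, while the topology on $\Delta(X)$ is the one induced by duality with $C_\R(X)$, that is, pointwise convergence of integrals. Hence $\mu\mapsto F_\mu$ is continuous directly from the definitions. Since $\Delta(X)$ is weak$^{*}$-compact by Banach--Alaoglu and $\calL^{+}(C_\R(X))$ is Hausdorff, this continuous bijection is automatically a homeomorphism, and we already checked it is affine; the result follows.
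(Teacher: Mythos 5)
Your outline is correct and is the standard proof of the Riesz--Markov--Kakutani representation theorem, which is exactly what the paper invokes here: the paper states this result without proof, as a reformulation of the classical Riesz theorem. One could streamline the final step by noting that the weak$^{*}$-topology on $\Delta(X)$ is by definition the initial topology induced by the maps $\mu\mapsto\int_X a\,\mathrm{d}\mu$, so once injectivity and surjectivity are in place the bijection is automatically a homeomorphism onto $\calL^{+}(C_{\R}(X))$ (with compactness of $\Delta(X)$ falling out as a consequence), but your compact-to-Hausdorff argument works equally well.
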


\subsection{States}
Let $M$ be an MV-algebra. A \emph{state of $M$} is a real function $s\colon M\to [0,1]$ satisfying $s(1)=1$ and the following condition for all $a,b\in M$:
\begin{equation}\label{eq:add}
\text{If $a\odot b=0$, then $s(a\oplus b)=s(a)+s(b)$.}
\end{equation}
Every MV-algebra carries at least one state as a consequence of Theorem~\ref{thm:Holder} and the fact that $\Max M\neq \emptyset$. Namely take an arbitrary $\mathfrak{m}\in \Max M$, put 
\begin{equation}\label{def:statemorph}
s_{\mathfrak{m}}(a)\coloneqq a^{*}(\mathfrak{m}), \quad a\in M,
\end{equation}
and observe that the MV-homomorphism $s_{\mathfrak{m}}\colon M\to [0,1]$ is a state of $M$. In fact every MV-homomorphism $h\colon M\to [0,1]$ is necessarily of the form $h=s_{\mathfrak{m}}$, for some $\mathfrak{m}\in \Max M$.

States of $M$ are in one-one correspondence with states of the enveloping $\ell$-group of $M$. Specifically, a \emph{state of $G$}, where $(G,u)$ is unital $\ell$-group, is~a~unital positive real homomorphism of $G$, that is, a group homomorphism $s\colon G\to \R$ such that $s[G^{+}]\subseteq \R^{+}$ and $s(u)=1$. See \cite[Chapter 4]{Goodearl86} for~a~thorough exposition of $\ell$-group states. 
\begin{theorem}\label{thm:groupext}
Let $M$ be an MV-algebra and $\Xi(M)$ be~its enveloping $\ell$-group. Then the restriction of a state $s$ of $\Xi(M)$ to a function $s\upharpoonright M\to [0,1]$ determines a~bijection between the states of $\Xi(M)$ and the~states of $M$.
\end{theorem}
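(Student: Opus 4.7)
The plan is to exhibit an explicit two-sided inverse of the restriction map. For the forward direction, given a state $s$ of $\Xi(M)$, set $\sigma := s \upharpoonright M$. Positivity of $s$ together with $0 \leq a \leq u$ for each $a \in M$ forces $\sigma(M) \subseteq [0,1]$, and $\sigma(1) = s(u) = 1$. If $a \odot b = 0$ in $M$ then $a + b \leq u$ in $\Xi(M)$, so $a \oplus b = (a+b) \wedge u = a + b$, and additivity of the group homomorphism $s$ yields $\sigma(a \oplus b) = \sigma(a) + \sigma(b)$. Thus $\sigma$ is a state of $M$.

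For the opposite direction, the bridge between MV-algebraic additivity and group additivity is the identity
\[
a + b = (a \oplus b) + (a \odot b), \qquad a, b \in M,
\]
verified in $\Xi(M)$ by considering the cases $a+b \leq u$ and $a + b > u$ separately. Combined with \ref{MV2} and \ref{MV3}, a short calculation yields the modularity law $\sigma(a \oplus b) + \sigma(a \odot b) = \sigma(a) + \sigma(b)$ for every state $\sigma$ of $M$. Since $u$ is an order unit of $\Xi(M)$, each $g \in \Xi(M)^+$ admits $n \in \N$ with $g \leq nu$, and the good-sequence elements $g_i := \bigl((g - (i-1)u) \vee 0\bigr) \wedge u$, $i = 1, \dots, n$, lie in $M$ and satisfy $g = g_1 + \cdots + g_n$. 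Define
\[
\tilde{\sigma}(g) := \sum_{i=1}^{n} \sigma(g_i), \qquad g \in \Xi(M)^+,
\]
and extend by $\tilde{\sigma}(g) := \tilde{\sigma}(g^+) - \tilde{\sigma}(g^-)$ to all of $\Xi(M)$.

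The main obstacle is to check that $\tilde{\sigma}$ is a well-defined positive group homomorphism with $\tilde{\sigma}(u) = 1$. Independence of the choice of $n$ is immediate because padding the good sequence with zeros contributes $\sigma(0) = 0$. Additivity on $\Xi(M)^+$ requires showing that any two representations of the same element of $\Xi(M)^+$ as a sum of elements of $M$ yield the same value under $\sigma$; this reduces, via the Riesz decomposition property of Abelian $\ell$-groups, to the modularity law established above. Normalisation $\tilde{\sigma}(u) = 1$ and positivity are built into the construction. Uniqueness of the extension is forced by the fact that every element of $\Xi(M)$ is a signed sum of elements of $M$, together with the group-homomorphism requirement. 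This provides the sought inverse of restriction and completes the bijection.
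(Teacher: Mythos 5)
Your construction is correct, and it is essentially the route the paper takes: the paper's own ``proof'' of Theorem~\ref{thm:groupext} is a citation to Mundici, but its Lemma~\ref{lem:extgroup} proves the general ($N$-valued) version by exactly this good-sequence extension, first establishing the modularity law $\sigma(a\oplus b)+\sigma(a\odot b)=\sigma(a)+\sigma(b)$ (Proposition~\ref{pro:PMequiv}(2)) and then summing $\sigma$ over the good sequence of a positive element. The one place you genuinely diverge is the verification that $\tilde{\sigma}$ is well defined and additive on $\Xi(M)^{+}$: the paper computes the good sequence of $\mathbf{a}+b_{0}$ explicitly and telescopes, whereas you appeal to Riesz refinement. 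That works, and arguably more cleanly: refinement produces $c_{ij}\in[0,u]$ with $a_{i}=\sum_{j}c_{ij}$ and $b_{j}=\sum_{i}c_{ij}$, and for group sums that stay inside $[0,u]$ consecutive partial sums satisfy $c\odot d=0$, so the state axiom alone gives $\sigma(\sum_{j}c_{ij})=\sum_{j}\sigma(c_{ij})$ by induction --- note that this step does not actually need the modularity law, contrary to what you say; modularity is what the telescoping argument of Lemma~\ref{lem:extgroup} needs. Two cosmetic points: the identity $a+b=(a\oplus b)+(a\odot b)$ holds in any $\ell$-group directly from $x\wedge u + x\vee u = x+u$ with $x=a+b$, so no case split on $a+b\leq u$ versus $a+b>u$ is needed (and in a partially ordered group that is not a dichotomy anyway); and it is worth stating explicitly that $\tilde{\sigma}\upharpoonright M=\sigma$ and that uniqueness of the extension forces restriction and extension to be mutually inverse, which is what makes the restriction map a bijection rather than merely a surjection.
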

\begin{proof}
 See \cite[Theorem 2.4]{Mundici95}. 
\end{proof}
By $\R^{M}$ we denote the locally convex Hausdorff space of real functions over $M$ equipped with the product topology. The \emph{state space of $M$} is a~nonempty set
\begin{equation}\label{def:St}
\St M \coloneqq \{s \mid \text{$s\colon M\to [0,1]$ is a state}\}.
\end{equation}
We will use the convergence of nets in topological spaces \cite[Definition 11.2]{Willard70}.
A net of states $(s_{\gamma})$ converges to a state $s$ in $\St M$ if it converges pointwise,
\begin{equation}\label{def:convstates}
s_{\gamma}(a) \to s(a)\quad \text{for every $a\in M$.}
\end{equation}
A straightforward verification proves that  $\St M$ is a compact convex subset of~the Tychonoff cube $[0,1]^{M}\subseteq \R^{M}$. By Krein-Milman theorem we know that $\St M$ coincides with the closed convex hull of the extreme boundary $\ext \St M$, which is precisely the set of all MV-homomorphisms into $[0,1]$,
\begin{equation}\label{eq:stspacemax}
\ext \St M = \{s_{\mathfrak{m}} \mid \mathfrak{m}\in \Max M\}.
\end{equation}
Since the map $\mathfrak{m}\in \Max M \mapsto s_{\mathfrak{m}}\in \St M$ is bijective and continuous, the~extreme boundary $\ext \St M$ is compact.

From the perspective of states, infinitesimal elements are irrelevant. This fact is in sharp contrast with behavior of states on pseudo MV-algebras \cite{diNolaDvurJak04}.
\begin{proposition}[{\cite[Proposition 3.1]{Mundici95}}]\label{pro:maxsemquo}
For any MV-algebra $M$, the state spaces $\St M$ and $\St (M/\Rad M)$ are affinely homeomorphic.
\end{proposition}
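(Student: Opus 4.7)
The plan is to show that the canonical map sending a state $s$ of $M$ to its quotient-counterpart on $M/\Rad M$ is the desired affine homeomorphism. The crux is the claim that every state of $M$ vanishes identically on $\Rad M$; once this is established, the construction of the map and its inverse is routine.

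First I would establish that for every $s\in\St M$ and every $a\in\Rad M$, we have $s(a)=0$. One route is via Theorem~\ref{thm:groupext}: extend $s$ to a state $\tilde s$ of the enveloping $\ell$-group $\Xi(M)$; since $a\in\Rad M$ is an infinitesimal, $na\leq u$ in $\Xi(M)$ for every $n\in\N$, hence $n\,\tilde s(a)=\tilde s(na)\leq\tilde s(u)=1$, forcing $\tilde s(a)=0$ and therefore $s(a)=0$. An alternative route uses Theorem~\ref{thm:Holder} together with \eqref{eq:stspacemax}: for every $\frakm\in\Max M$ the extreme state $s_{\frakm}$ satisfies $s_{\frakm}(a)=a^{*}(\frakm)=0$, since $a$ lies in every maximal ideal; as $s\mapsto s(a)$ is a continuous affine function on $\St M$ that vanishes on $\ext\St M$, Krein--Milman yields $s(a)=0$ for every $s\in\St M$.

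Next, define $\Phi\colon\St M\to\St(M/\Rad M)$ by $\Phi(s)(a/\Rad M)\coloneqq s(a)$. The vanishing-on-radical step makes this independent of the representative, and the state axioms for $\Phi(s)$ follow immediately from those of $s$ combined with the compatibility of $\oplus,\odot,\neg$ with the quotient. The inverse is $\Psi(t)\coloneqq t\circ\pi$, where $\pi\colon M\to M/\Rad M$ is the canonical projection; that $\Psi(t)$ is a state and that $\Phi,\Psi$ are mutually inverse is a direct verification. Affinity of both maps is transparent from their pointwise definitions.

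For the topological part, continuity of $\Phi$ follows from the pointwise convergence in \eqref{def:convstates}: if $s_{\gamma}\to s$ in $\St M$, then for each $a\in M$ one has $\Phi(s_{\gamma})(a/\Rad M)=s_{\gamma}(a)\to s(a)=\Phi(s)(a/\Rad M)$, so $\Phi(s_{\gamma})\to\Phi(s)$ in the product topology. Since $\St M$ is compact and $\St(M/\Rad M)$ is Hausdorff (being a subspace of a Tychonoff cube), the continuous bijection $\Phi$ is automatically a homeomorphism. The only genuine content is the initial vanishing claim; the rest is formal bookkeeping.
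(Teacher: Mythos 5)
Your argument is correct and is essentially the standard one that the paper delegates to \cite[Proposition 3.1]{Mundici95}: states annihilate $\Rad M$, hence factor through the semisimple quotient, and compactness plus Hausdorffness upgrades the continuous affine bijection to a homeomorphism. Both of your routes to the vanishing claim are sound (the second, via \eqref{eq:stspacemax} and Krein--Milman, is entirely self-contained given the paper's preliminaries), and the only step worth spelling out in the ``routine bookkeeping'' is well-definedness of $\Phi$, i.e.\ that $s(a)=s(b)$ whenever $a\ominus b$ and $b\ominus a$ lie in $\Rad M$, which follows from monotonicity and subadditivity of $s$.
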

\noindent
In the rest of this section we will always assume that an MV-algebra $M$ is~ semisimple. If it is not the case, we can simply replace $M$ with $M/ \Rad M$ in~the~ arguments involving the corresponding state spaces.

 The main characterisation of states is based on integral representation of certain linear functionals. The reader is invited to consult Appendix \ref{sec:coco} for~all the concepts related to compact convex sets. Let $\Delta(\Max M)$ be the Bauer simplex of all regular Borel probability measures on $\Max M$ and $\calL^{+}(C_{\R}(\Max M))$ be the Bauer simplex of all unital positive linear functionals on $C_{\R}(\Max M)$. For any $\mu \in \Delta(\Max M)$, observe that the function $s_{\mu}\colon M\to [0,1]$ defined by
\begin{equation}\label{thm:int}
s_{\mu}(a)\coloneqq \int\limits_{\Max M} a^{*} \;\mathrm{d}\mu, \qquad a\in M,
\end{equation}
is a state of $M$. Note that the restriction of any  $L \in \calL^{+}(C_{\R}(\Max M))$ to $M^{*}$ is also a state.
\begin{theorem}[{\cite{Panti08_InvMeasures,FlaminioKroupa15}}]\label{thm:intrep}
Let $M$ be a semisimple MV-algebra. Then:
\begin{enumerate}
\item The restriction map
\begin{equation}
L\in \calL^{+}(C_{\R}(\Max M)) \mapsto L  \upharpoonright M^{*}\in \St M^{*}
\end{equation}
is an affine homeomorphism of $\calL^{+}(C_{\R}(\Max M))$ onto $\St M^{*}$.
\item The map 
\begin{equation}
\mu \in \Delta(\Max M) \mapsto s_{\mu}\in \St M
\end{equation}
is an affine homeomorphism of $\St M$ onto $\Delta(\Max M)$.
\end{enumerate}
\end{theorem}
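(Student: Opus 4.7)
The plan is to establish part (1) directly and then derive part (2) as a formal composition with the Riesz theorem quoted just above. Part (1) is the substantive step; part (2) is largely bookkeeping.

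For (1), I would first check well-definedness: given $L\in \calL^{+}(C_{\R}(\Max M))$ and $a,b\in M^{*}$ with $a\odot b=0$, pointwise evaluation on $\Max M$ gives $a+b\leq 1$, so the MV-sum $a\oplus b$ coincides with the ordinary real sum, and linearity of $L$ together with $L(1)=1$ yields a state of $M^{*}$. Affinity and continuity (weak$^{*}$ on the domain, pointwise on the codomain) are immediate.

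The crux is surjectivity. Given $s\in \St M^{*}$, I would extend $s$ to a state of the enveloping $\ell$-group $\Xi(M^{*})$ via Theorem \ref{thm:groupext}. Since $M^{*}$ is semisimple, $\Xi(M^{*})$ can be realised as a unital $\ell$-subgroup of $C_{\R}(\Max M)$ with order unit the constant function $1$. The $\Q$-linear span $V$ of $\Xi(M^{*})$ is then a $\Q$-vector sublattice of $C_{\R}(\Max M)$ that contains constants and separates points (the latter holds by Theorem \ref{thm:Holder} because $M^{*}$ is separating), so the Kakutani-Stone theorem for vector lattices makes $V$ uniformly dense. Extending $s$ $\Q$-linearly to $V$ produces a positive unital $\Q$-linear functional; the inequality $\pm a\leq \norm{a}\cdot 1$ combined with positivity gives $\abs{s(a)}\leq \norm{a}$, so the extension is $\norm{.}$-continuous and extends uniquely by density to the required $L\in \calL^{+}(C_{\R}(\Max M))$ with $L\upharpoonright M^{*}=s$. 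Injectivity is then automatic: every element of $\calL^{+}(C_{\R}(\Max M))$ is $\norm{.}$-continuous and hence determined by its values on the dense subspace $V$, which by Theorem \ref{thm:groupext} is determined by its restriction to $M^{*}$. Since $\calL^{+}(C_{\R}(\Max M))$ and $\St M^{*}$ are both compact Hausdorff, the continuous affine bijection is automatically an affine homeomorphism.

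For (2) I would simply compose three affine homeomorphisms: the Riesz map $\mu\mapsto F_{\mu}$ from $\Delta(\Max M)$ onto $\calL^{+}(C_{\R}(\Max M))$, the restriction map of (1), and the bijection $\St M^{*}\to \St M$ induced by precomposition with the MV-isomorphism $^{*}\colon M\to M^{*}$ from Theorem \ref{thm:Holder}(2). The composite sends $\mu$ to the state $a\mapsto F_{\mu}(a^{*})=\int_{\Max M}a^{*}\,\mathrm{d}\mu=s_{\mu}(a)$, as required. The main obstacle is the extension step of (1): it relies on the lattice form of Stone-Weierstrass applied to $V$ and on the $\norm{.}$-boundedness of the $\Q$-linear extension of $s$, both of which rest essentially on semisimplicity of $M^{*}$.
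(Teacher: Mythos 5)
Your proposal is correct and follows essentially the same route as the paper: the paper quotes this theorem from the literature, but its own proof of the generalisation (Theorem \ref{thm:extoperator}, explicitly modelled on Panti's argument) is exactly your three-step extension — lift the state to the enveloping $\ell$-group via Theorem \ref{thm:groupext}, pass to the divisible hull realised as a norm-dense $\Q$-vector sublattice of $C_{\R}(\Max M)$ by lattice Stone--Weierstrass, and extend by norm-continuity — with part (2) then obtained by composing with the Riesz representation. The only point worth tightening is the norm bound on the $\Q$-linear extension: since $\norm{a}$ need not be rational, one should bound $\abs{s(a)}$ by every rational $q\geq\norm{a}$ and take the infimum, as the paper does.
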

\noindent
The difficult part of the theorem says that states of a (semisimple) MV-algebra are uniquely extendible to positive linear functionals of the ambient Banach lattice of continuous functions. Then it is a consequence of Riesz theorem that any state $s\in \St M$ is of the form $s=s_{\mu}$, for a unique measure $\mu \in \Delta(\Max M)$.

 Summing up, the state space $\St M$ of any MV-algebra $M$ is a Bauer simplex that is affinely homeomorphic to any of the Bauer simplices appearing on the~list below:
\begin{itemize}
\item The state space of the maximal semisimple quotient $M/\Rad M$.
\item The state space of the enveloping $\ell$-group $\Xi(M)$.
\item Unital positive linear functionals $\calL^{+}(C_{\R}(\Max M))$.
\item Regular Borel probability measures $\Delta(\Max M)$.
\end{itemize}

\section{Probability maps} \label{sec:PM}
 A probability map (Definition \ref{def:genstate}) is an additive normalised map of an MV-algebra~$M$ into a more general codomain MV-algebra $N$ than the standard algebra $[0,1]$. At the same time, Definition \ref{def:genstate} extends the concept of ``generalised state" introduced by the author and Marra in \cite{KroupaMarraSC17}, where the domain MV-algebra $M$ is taken to be a Boolean algebra. Unlike other presentations of~probability-like maps of MV-algebras or similar structures, the one in Definition \ref{def:genstate} is purely equational. Nevertheless, Proposition \ref{pro:PMequiv} below shows that  probability maps admit also a more traditional definition that is analogous to the defining condition \eqref{eq:add} of $[0,1]$-valued states.

Throughout this section, $M$ and $N$ denote MV-algebras.
 Same symbols are used to denote the operations and the constants in both $M$ and $N$; their meaning will always be clear from the context.
\begin{definition}\label{def:genstate}
A \emph{probability map} is a function $p\colon M\to N$ satisfying the~following identities for all $a,b\in M$.
\begin{enumerate}[label=(P\arabic*),leftmargin=1.2\parindent]
\item \label{P1} $p(a\oplus b)=p(a)\oplus p(b\wedge \neg a)$
\item \label{P2} $p(\neg a)=\neg p(a)$
\item \label{P3} $p(1)=1$
\end{enumerate}
\end{definition}
It is not difficult to show that the axioms \ref{P1}--\ref{P3} are independent.
Note that $p$ is also normalised at $0$ since, by \ref{P2} and \ref{P3}, $p(0)=\neg p(1 )=0$.
Every MV-homomorphism is a probability map as \ref{P1} follows from \ref{MV2} in~this case. The axiomatisation in Definition \ref{def:genstate} is inspired by that of internal states of MV-algebras studied by Flaminio and Montagna \cite{FlaminioMontagna09}. An \emph{internal state} of an MV-algebra $M$ is a mapping $\sigma\colon M\to M$ satisfying the conditions (1)--(4) for all $a,b\in M$:
\begin{enumerate}
\item $\sigma(0)=0$
\item $\sigma(\neg a)=\neg \sigma(a)$
\item $\sigma(a\oplus b)=\sigma(a) \oplus \sigma(b\ominus (a\odot b))$
\item $\sigma(\sigma(a)\oplus \sigma(b))=\sigma(a)\oplus \sigma(b)$
\end{enumerate}
By \ref{MV1} we can rewrite (P1) as
		\begin{enumerate}[label=(P\arabic*'),leftmargin=1.3\parindent]
			\item\label{P1'} $p(a\oplus b)=p(a) \oplus p(b\ominus (a\odot b))$
\end{enumerate}
which is among the defining conditions of internal states. Thus, we can equivalently define a probability map to be a function $p\colon M\to N$ satisfying \ref{P1'}, (P2), and (P3). This also shows that every internal state of $M$ is a probability map $M\to M$. However, the converse is typically not true. Whereas any internal state is an idempotent map,  there are already MV-endomorphisms that fail to be idempotent.

We will collect useful elementary properties of probability maps into this lemma.

\begin{lemma}\label{pro:basic}
	Let $p\colon M\to N$ be a probability map. For all $a,b\in M$:
	\begin{enumerate}
		\item If $a\leq b$, then $p(a)\leq p(b)$.
		\item $p(a\oplus b)\leq p(a)\oplus p(b)$, and if $a\odot b=0$, then $p(a\oplus b)=p(a) \oplus p(b)$.
		\item $p(a\ominus b)\geq p(a)\ominus p(b)$, and if $b\leq a$, then $p(a\ominus b)= p(a)\ominus p(b)$.
		\item $p(a\odot b)\geq p(a)\odot p(b)$, and if $a\odot b=0$, then $p(a)\odot p(b)=0$.
	\end{enumerate}	
\end{lemma}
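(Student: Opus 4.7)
All four parts follow routinely from (P1)--(P3), the MV-identities (MV1)--(MV3) recalled above, and the De~Morgan translations $a\ominus b=\neg(\neg a\oplus b)$ and $a\odot b=\neg(\neg a\oplus \neg b)$. I would handle them in the stated order, since (2)--(4) will lean on the monotonicity established in (1).

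\textbf{Parts (1) and (2).} For (1), the key observation is that when $a\leq b$ one has $b=a\oplus(b\ominus a)$ with $a\odot(b\ominus a)=0$, so applying (P1) and simplifying the $\wedge\neg a$ factor via (MV1) yields $p(b)=p(a)\oplus p(b\ominus a)$, which manifestly dominates $p(a)$. Part (2) then drops out: (P1) gives $p(a\oplus b)=p(a)\oplus p(b\wedge \neg a)$, and replacing the second summand by the larger $p(b)$ using (1) together with isotony of $\oplus$ delivers the inequality. Under the additional hypothesis $a\odot b=0$, identity (MV1) collapses $b\wedge \neg a$ to $b$, and the inequality becomes an equality.

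\textbf{Parts (3) and (4).} Both reduce to (2) via (P2) and De~Morgan. For (3) I would write $a\ominus b=\neg(\neg a\oplus b)$, apply (P2), bound $p(\neg a\oplus b)$ above by $\neg p(a)\oplus p(b)$ using (2), and negate, obtaining $p(a\ominus b)\geq p(a)\ominus p(b)$. When $b\leq a$ we have $\neg a\odot b\leq \neg b\odot b=0$, so the (2)-inequality for $\neg a\oplus b$ is already an equality, and negating recovers $p(a\ominus b)=p(a)\ominus p(b)$. Part (4) is structurally identical via $a\odot b=\neg(\neg a\oplus \neg b)$; the disjointness clause is even easier, because $a\odot b=0$ is equivalent to $b\leq \neg a$, so monotonicity yields $p(b)\leq \neg p(a)$, i.e.\ $p(a)\odot p(b)=0$.

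\textbf{Main subtlety.} There is no substantive obstacle; the only pitfall worth flagging is the equality clause of (3). One should resist trying to extract it from an identity of the form $p(a\vee b)=p(b)\oplus p(a\ominus b)$ together with $p(b)\leq p(a)$, because $\oplus$ is not cancellative in the codomain and the putative summand cannot be recovered by an $\ominus$. The De~Morgan route above sidesteps this by reducing everything to the equality case of (2).
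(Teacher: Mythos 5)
Your proof is correct and follows essentially the same route as the paper's: part (1) via the decomposition $b=a\oplus(b\ominus a)$ and \ref{P1}, part (2) via \ref{P1}/\ref{P1'} plus monotonicity, and parts (3)--(4) by De Morgan reduction to (2). The minor variations (simplifying $(b\ominus a)\wedge\neg a$ to $b\ominus a$ in (1), and deriving the disjointness clause of (4) from $b\leq\neg a$ rather than from $p(a)\odot p(b)\leq p(a\odot b)=0$) are cosmetic.
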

\begin{proof}
	(1)  Let $a\leq b$. Then $b=a\oplus (b\ominus a)$ and using \ref{P1} we get
	\[
	p(b)=p(a\oplus (b\ominus a))=p(a)\oplus p((b\ominus a)\wedge \neg a) \geq p(a).
	\]
	
	(2)\ It follows from \ref{P1'} and (1) that 
	\[
	p(a\oplus b)=p(a) \oplus p(b\ominus (a\odot b)) \leq p(a) \oplus p(b).
	\]
	Let $a\odot b=0$. Then 
	\(
	p(a\oplus b)=p(a) \oplus p(b\ominus 0)=p(a)\oplus p(b)
	\).

	(3)\ By the definition of $\ominus$, \ref{P2}, and (2),
	\begin{align*}
	p(a\ominus b) &=p(\neg ( \neg a\oplus b))=\neg p(\neg a\oplus b)\\&\geq \neg (\neg p(a)\oplus p(b))=p(a)\odot \neg p(b)=p(a)\ominus p(b).
	\end{align*}
	Let $b\leq a$. Then $\neg a\odot b=0$ and the equality in the  formula above follows from (2).

	(4) We get
	\begin{equation*}
	p(a\odot b)=p(a\ominus \neg b)\geq p(a)\ominus p(\neg b)=p(a)\odot \neg p(\neg b)=p(a)\odot p(b).
	\end{equation*}
	Let $a\odot b=0$. Then 
	\(
	p(a)\odot p(b) \leq p(a\odot b)=p(0)=0\).
\end{proof}
Probability maps can be viewed as genuinely additive maps between MV-algebras.
	
\begin{proposition}\label{pro:PMequiv}
		Let $p\colon M\to N$ satisfy $p(0)=0$ and $p(1)=1$. The following are equivalent.
		\begin{enumerate}
			\item $p$ is a probability map.
			\item $p(a\oplus b)=p(a)+p(b)-p(a\odot b)$, for every $a,b \in M$, where $+$ and $-$ are operations in the enveloping unital $\ell$-group $\Xi(N)$ of $N$.
			\item If $a\odot b=0$, then $p(a\oplus b)=p(a) \oplus p(b)$ and $p(a)\odot p(b)=0$, for all $a,b\in M$.
			\item If $a\odot b=0$, then $p(a\oplus b)=p(a) + p(b)$, for all $a,b\in M$. 			
		\end{enumerate}
	\end{proposition}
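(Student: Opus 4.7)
I will establish the four implications as a cycle
\[
(1)\Rightarrow (3)\Rightarrow (4)\Rightarrow (2)\Rightarrow (1).
\]
The engine of the argument is the interplay between the MV-identities \ref{MV1}--\ref{MV3}, the elementary consequences already recorded in Lemma~\ref{pro:basic}, and the passage to the enveloping $\ell$-group $\Xi(N)$ in order to convert $\oplus$ into~$+$ whenever the relevant sum stays below the unit.

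The first two arrows cost almost nothing. The implication (1)$\Rightarrow$(3) is just Lemma~\ref{pro:basic}(2) and~(4) read off together: when $a\odot b=0$ they give $p(a\oplus b)=p(a)\oplus p(b)$ and $p(a)\odot p(b)=0$ respectively. For (3)$\Rightarrow$(4) I would observe that $p(a)\odot p(b)=0$ is exactly $p(a)+p(b)\leq 1$ in $\Xi(N)$, and in this range $p(a)\oplus p(b)$ and $p(a)+p(b)$ coincide, so $p(a\oplus b)=p(a)+p(b)$ follows.

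The core computation is (4)$\Rightarrow$(2). For arbitrary $a,b\in M$, I would begin by rewriting $a\oplus b=a\oplus(b\wedge\neg a)$ using \ref{MV2}, and invoke~\ref{MV3} to secure $a\odot(b\wedge\neg a)=0$, so that (4) produces
\[
p(a\oplus b)=p(a)+p(b\wedge\neg a)\quad\text{in } \Xi(N).
\]
Identity~\ref{MV1} rewrites the second summand as $p(b\ominus(a\odot b))$. Since $a\odot b\leq b$, the pair $(a\odot b,\,b\ominus(a\odot b))$ is a disjoint decomposition of~$b$, so a second appeal to~(4) gives $p(b\ominus(a\odot b))=p(b)-p(a\odot b)$ in $\Xi(N)$. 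Substituting yields~(2).

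For (2)$\Rightarrow$(1), condition \ref{P3} is assumed. Substituting $b=\neg a$ in~(2), together with $a\oplus\neg a=1$, $a\odot\neg a=0$, and $p(0)=0$, forces $p(\neg a)=1-p(a)$, which is~\ref{P2}. For~\ref{P1}, I use \ref{MV2} to replace $b$ by $b\wedge\neg a$, apply~(2) with \ref{MV3} to obtain $p(a\oplus b)=p(a)+p(b\wedge\neg a)$, and finally convert $+$ back to $\oplus$ using the fact that the left-hand side already lies in $N=[0,1]$. The only place where care is required is the double use of (4) inside (4)$\Rightarrow$(2), since one has to toggle correctly between the structures $M$, $N$, and~$\Xi(N)$; but no genuine obstacle arises.
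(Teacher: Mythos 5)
Your proposal is correct, and it closes the equivalences by a different cycle than the paper: you prove $(1)\Rightarrow(3)\Rightarrow(4)\Rightarrow(2)\Rightarrow(1)$, whereas the paper proves $(1)\Rightarrow(2)\Rightarrow(3)\Rightarrow(1)$ and then observes $(3)\Leftrightarrow(4)$ separately. The real divergence is in how $(2)$ is reached. The paper derives it from the full axioms $(1)$ via Lemma~\ref{pro:basic} and the $\ell$-group identity $c+d=(c\oplus d)+(c\odot d)$; you instead derive it from the apparently weaker hypothesis $(4)$ alone, by applying additivity twice --- once to the disjoint pair $(a,\,b\wedge\neg a)$ and once to the disjoint decomposition $b=(a\odot b)\oplus(b\ominus(a\odot b))$, the disjointness of the latter following from $(a\odot b)\odot\bigl(b\odot\neg(a\odot b)\bigr)=0$ by associativity of $\odot$. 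This buys you a direct proof that finite additivity on disjoint pairs already forces the inclusion--exclusion formula, a fact the paper only obtains indirectly by first recovering \ref{P1}--\ref{P3}. Your $(2)\Rightarrow(1)$ also recovers \ref{P2} from $p(a)+p(\neg a)=1$ in $\Xi(N)$ directly, rather than via the paper's appeal to the MV-algebraic cancellation lemma from $p(a)\oplus p(\neg a)=1$ and $p(a)\odot p(\neg a)=0$. One small slip: at the end you say the left-hand side ``lies in $N=[0,1]$''; since $N$ is an arbitrary MV-algebra this should read that $p(a\oplus b)$ lies in the order interval $[0,u]=\Gamma(\Xi(N))$, so that the sum $p(a)+p(b\wedge\neg a)$ is dominated by the unit and hence coincides with the truncated sum $\oplus$. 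With that rephrasing the argument is complete.
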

	\begin{proof}
 (1) $\Rightarrow$ (2)  By Lemma \ref{pro:basic}, \ref{MV3}, and the fact that 
 \[
 c+ d=(c\oplus d)+(c\odot d)
 \]
  for all $c,d\in M$ (see \cite[Lemma 2.1.3]{CignoliOttavianoMundici00}), we obtain
		\begin{align*}
			p(a\oplus b) =p(a) \oplus (p(b)\ominus p(a\odot b))=p(a) + (p(b) - p(a\odot b)).
		\end{align*} 
		
 (2) $\Rightarrow$ (3) Let $a\odot b=0$. Then $p(a)+p(b) =p(a\oplus b)\leq 1$, which means that $p(a\oplus b)=p(a)\oplus p(b)$ and $p(a)\odot p(b)=0$.

 (3) $\Rightarrow$ (1) The identity $a\odot  \neg a=0$ and assumption yield $p(a)\odot p(\neg a)=0$. Further,
		\[
		1=p(1)=p(a\oplus \neg a)=p(a)\oplus p( \neg a), \quad \text{for all $a \in M$.}
		\]
		Therefore $p(\neg a)=\neg p(a)$ by \cite[Lemma 1.1.3]{CignoliOttavianoMundici00} and thus \ref{P2} in Definition \ref{def:genstate} holds. Finally, we  prove \ref{P1}. Using \ref{MV1}--\ref{MV3}, we obtain
		\[
		p(a\oplus b)=p(a\oplus (b\wedge \neg a ))=p(a) \oplus p(b\wedge \neg a ), \quad a,b \in M.
		\]
		Hence, $p$ is a probability map. 
		
The conditions (3) and (4) are equivalent since $c\odot d=0$ holds in $N$ if, and only if, $c+d\leq 1$ in the enveloping $\ell$-group $\Xi(N)$ of $N$.		
\end{proof}

Put
\[
\calP(M,N) \coloneqq \{p \mid \text{$p\colon M\to N$ is a probability map}\}.
\]
 Using
 the defining condition \eqref{eq:add} of real-valued states and Proposition \ref{pro:PMequiv}(4), one can immediately see that states of MV-algebras are exactly the probability maps whose codomain $N$ is the standard MV-algebra $[0,1]$. This means that $\calP(M,[0,1])$ is the same as the classical state space \eqref{def:St} of $M$,
\[
\calP(M,[0,1]) = \St M.
\]
The state space $\St M$ is always nonempty for any $M$, whereas it can easily happen $\calP(M,N)=\emptyset$. For example, let $M\coloneqq [0,1]$ and $N$ be the two-element Boolean algebra $\{0,1\}$. Any $p\in \calP([0,1],\{0,1\})$ must  necessarily be a state $p\colon [0,1]\to [0,1]$ whose range is $\{0,1\}$. But there is no such $s$ since the only existing state of $[0,1]$ is the identity map. Hence, $\calP([0,1],\{0,1\})=\emptyset$.

In general, even when the set $\calP(M,N)$ is nonempty, it fails to be convex.
\begin{example}
In this example we put $M=N=F_{\mathrm{MV}}(1)$, where $F_{\mathrm{MV}}(1)$ is as in Example \ref{ex:FMV}. Then $\calP(M,M)\neq \emptyset$ since any MV-endomorphism of $M$ is a~probability map. In addition, there are probability maps $p\colon M\to M$ that are not MV-endomorphisms. For example, consider the probability map defined as
\begin{equation}\label{ex:PM}
p(a)(x) \coloneqq  x\cdot a(0) + (1-x)\cdot a(1), 
\end{equation}
for all $a\in M$ and every  $x\in [0,1]$. Then $p\in \calP(M,M)$ is not an MV-endomorphism since it fails to preserve suprema. Indeed, take $a\coloneqq \id$, where $\id$ is the identity function on $[0,1]$: 
\[
p(\id \vee \neg \id)=1 \neq \id \vee \neg \id = p(\neg \id) \vee p(\id).\]
With $\calP(M,M)$ being a subset of the real linear space $C_{\R}([0,1])^{M}$ of all maps $M\to C_{\R}([0,1])$, it is sensible to ask if $\calP(M,M)$  is a convex set. This is clearly not the case. It is enough to consider MV-endomorphisms $p_{1}$ and $p_{2}$ of $M$ defined by
\[
p_{1}(a)\coloneqq a, \quad p_{2}(a)(x) \coloneqq a(1-x), \quad a\in M, x\in [0,1].
\]
Observe that the range of the map $q\coloneqq \tfrac 12 (p_{1}+p_{2})$ is not even a subset of $M$ and so $q\notin \calP(M,M)$.
\end{example}
The previous example suggests that convexity of $\calP(M,N)$ might be attained   when the universe of algebra $N$ is a convex set in some linear space. This occurs, for example, when the codomain $N$ is a Riesz MV-algebra \cite{DiNolaLestean14}. However, such a level of generality is not needed in the paper, and we will always consider a convex codomain of a very special kind, the MV-algebra $C(X)$ introduced in Example~\ref{ex:MVcont}. Then the next result is immediate.
\begin{proposition}\label{pro:convex}
Let $M$ be an MV-algebra and $Y$ be a compact Hausdorff space. Then $\calP(M,C(Y))$ is a nonempty convex subset of the real linear space $C_{\R}(Y)^{M}$.
\end{proposition}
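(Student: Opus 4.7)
The plan is to establish nonemptiness by a constant-state construction and to establish convexity by invoking the additive reformulation of probability maps from Proposition \ref{pro:PMequiv}(4).

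For nonemptiness, I would use that the classical state space $\St M$ is always nonempty (for instance, by Theorem \ref{thm:Holder} any $\frakm\in \Max M$ yields a state $s_\frakm$). Pick any $s\in \St M$ and define $p_s\colon M\to C(Y)$ by letting $p_s(a)$ be the constant function on $Y$ with value $s(a)\in[0,1]$. Constant functions with values in $[0,1]$ lie in $C(Y)$, so $p_s$ is well defined, and the axioms \ref{P1}--\ref{P3} for $p_s$ reduce to the corresponding real-valued identities for $s$, which hold because $s$ is a probability map into $[0,1]$.

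For convexity, fix $p_1,p_2\in \calP(M,C(Y))$ and $\lambda\in[0,1]$, and set $q\coloneqq \lambda p_1+(1-\lambda)p_2$ (the linear combination computed pointwise in the real vector space $C_{\R}(Y)^{M}$). First I would check that $q(a)\in C(Y)$ for every $a\in M$: continuity follows from continuity of $p_i(a)$, and the values stay in $[0,1]$ because at each $y\in Y$ we have $q(a)(y)=\lambda p_1(a)(y)+(1-\lambda)p_2(a)(y)$, a convex combination of two numbers in $[0,1]$. Then I would verify the equivalent condition in Proposition \ref{pro:PMequiv}(4): clearly $q(0)=0$ and $q(1)=1$, and if $a\odot b=0$ in $M$ then each $p_i$ satisfies $p_i(a\oplus b)=p_i(a)+p_i(b)$ in the enveloping $\ell$-group $\Xi(C(Y))$, so taking the corresponding $\lambda$-combination yields $q(a\oplus b)=q(a)+q(b)$. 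Hence $q\in \calP(M,C(Y))$.

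I do not anticipate a genuine obstacle here; the only subtlety worth flagging is that the linear-space ambient $C_{\R}(Y)^{M}$ does not a priori constrain the values of $q(a)$ to the unit interval of $C(Y)$, and it is precisely the convex-combination identity $\lambda+(1-\lambda)=1$ together with $p_i(a)(y)\in[0,1]$ that keeps $q(a)$ inside $C(Y)$. This is also the reason the argument does not extend to arbitrary affine combinations, and it is what makes the additive reformulation in Proposition \ref{pro:PMequiv}(4) more convenient here than the original equational definition, since closure under $+$ in $\Xi(C(Y))$ is automatic for real-linear combinations.
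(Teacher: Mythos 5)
Your proof is correct and follows essentially the same route as the paper: nonemptiness via constant-valued maps built from Theorem \ref{thm:Holder}, and convexity by checking that a convex combination lands in $C(Y)$ and satisfies the additivity condition of Proposition \ref{pro:PMequiv}(4). The only cosmetic difference is that you start from an arbitrary state $s\in\St M$ where the paper uses an MV-homomorphism $h\colon M\to[0,1]$ (a choice it later reuses to exhibit an extreme point in Corollary \ref{cor:ext}); both yield a valid constant probability map.
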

\begin{proof}
By Theorem \ref{thm:Holder} an MV-algebra $M$ has at least one MV-homomorphism  $h\colon M\to [0,1]$. Define $p_{h}\colon M\to C(Y)$ as
\begin{equation}\label{eq:proeq}
p_{h}(a)(y) \coloneqq h(a), \qquad a\in M, \enskip y\in Y.
\end{equation}
Then each $p_{h}(a)$ is a constant function. It is easy to see that $p_{h}$ is an MV-homomorphism. Thus, $\calP(M,C(Y))\neq \emptyset$.

Consider $p,q\in \calP(M,C(Y))$ and $\alpha \in [0,1]$.
 We need to check that the map $r\colon M\to C_{\R}(Y)$ defined by
\(
r\coloneqq \alpha p + (1-\alpha)q
\)
belongs to $\calP(M,C(Y))$. Since $\alpha a + (1-\alpha)b \in C(Y)$, for all $a,b\in C(Y)$, the map $r$ ranges in $C(Y)$. Clearly, $r$ satisfies $r(1)=1$, and it takes only a routine verification to show that $r$  is additive in the sense of  Proposition \ref{pro:PMequiv}(4). 
\end{proof}

The following example appears in many different contexts with mild variations, ranging from $\ell$-groups \cite{Goodearl86}  over MV-algebras \cite{Boccuto2017,KroupaMarraSC17} to integral representation theory for compact convex sets \cite{Alfsen:71}.

\begin{example}[Affine representation]\label{ex:affine}
Natural affine representations of partially ordered groups are based on affine continuous functions on their state spaces \cite[Chapter 11]{Goodearl86}. We recall  that $\Aff(A)$ denotes the linear space of~all real-valued affine continuous functions on a convex set $A$ in some linear space; see Appendix \ref{sec:coco}. The affine representation of an MV-algebra can be constructed as follows. Let $M$ be an MV-algebra, $\St M$ be its state space, and~consider the linear space $\Aff(\St M)$ that is partially ordered by the pointwise order of real functions on $\St M$. An order unit in $\Aff(\St M)$ is taken to~be the constant function $1$. Since $\St M$ is a Bauer simplex, it is a direct consequence of \cite[Theorem 11.21]{Goodearl86} that $\Aff(\St M)$ is in fact lattice ordered, hence a Riesz space. An affine representation of $M$ is the (Riesz) MV-algebra
\[
M_{\mathrm{Aff}} \coloneqq \Gamma(\Aff(\St M)).
\]
For any $a\in M$ consider a function
\[
\hat{a}(s) \coloneqq s(a), \quad s\in \St M,
\]
which is clearly affine and continuous, $\hat{a}\in M_{\mathrm{Aff}}$. Then an injective mapping $\phi\colon M\to M_{\mathrm{Aff}}$ defined by\begin{equation}\label{def:affinephi}
\phi(a)\coloneqq \hat{a}\end{equation}
is a probability map that is not an MV-homomorphism. Indeed, the equation $\phi(a\vee b)=\phi(a)\vee \phi(b)$ cannot be satisfied for all $a,b\in M$, as there are states $s\in \St M$ such that $s(a\vee b)\neq s(a\vee b)$. Observe that such states are not extreme points of $\St M$.

\end{example}

\begin{remark}
Let $M$ be an MV-algebra and $N$ be a Riesz MV-algebra \cite{DiNolaLestean14}. Boccuto et al. \cite[Definition 4.7]{Boccuto2017} adopt the following definition. A \emph{generalised state} is a map $s\colon M\to N$ satisfying $s(1)=1$ and the condition
\begin{equation*}
s(a\oplus b) = s(a)\oplus s(b), \qquad \text{for all $a,b\in M$ with $a\odot b=0$.}
\end{equation*}
On the one hand, the condition above is weaker than the one in Proposition \ref{pro:PMequiv}(4). On the other, however,  a codomain of  a probability map is not necessarily required to be a Riesz MV-algebra. Summing up, probability maps and generalised states of \cite{Boccuto2017} cannot be directly compared.
\end{remark}

\subsection{Extension to positive operators}\label{subsec:Ext}
First, we will prove a preparatory lemma, which generalises Theorem \ref{thm:groupext} about the extension of states from $M$ onto the states of the enveloping $\ell$-group $\Xi(M)$.

\begin{lemma}\label{lem:extgroup}
Let $M$ and $N$ be MV-algebras. For every $p\in \calP(M,N)$ there is precisely one unital positive group homomorphism $f_{p}\colon \Xi(M)\to \Xi(N)$ such that $f_{p}(a) = p(a)$, for all $a\in M$.
\end{lemma}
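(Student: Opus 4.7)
The approach is to mimic Mundici's classical extension proof cited in Theorem \ref{thm:groupext}, with the codomain $\R$ replaced by the unital $\ell$-group $\Xi(N)$. The crucial input is Proposition \ref{pro:PMequiv}(4): whenever $a\odot b=0$ in $M$, one has $p(a\oplus b)=p(a)+p(b)$ with the sum taken in $\Xi(N)$. In other words, $p$ is \emph{strongly additive} on orthogonal pairs, and this is precisely the property that, under the $\Gamma$ functor, corresponds to additivity of a map on the positive cone of the enveloping group.

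Concretely, I would invoke the good-sequence description of $\Xi(M)^{+}$ from \cite[Chapter 7]{CignoliOttavianoMundici00}: every $g\in \Xi(M)^{+}$ with $g\le n u$ admits the canonical decomposition $g=a_{1}+\cdots+a_{n}$, where $a_{i}\coloneqq (g-(i-1)u)\wedge u \vee 0$ lies in $M$, and $(a_{1},\dots,a_{n},0,0,\dots)$ is a good sequence. I would then define
\[
f_{p}(g)\coloneqq p(a_{1})+\cdots+p(a_{n})\in \Xi(N),
\]
with the sum taken in $\Xi(N)$, and extend to all of $\Xi(M)$ by $f_{p}(g-h)\coloneqq f_{p}(g)-f_{p}(h)$ for $g,h\in \Xi(M)^{+}$. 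Positivity is built into the construction, unitality follows from $f_{p}(u)=p(1)=1$, and agreement with $p$ on $M$ is immediate since elements of $M\subseteq \Xi(M)^{+}$ correspond to good sequences of length one.

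The main obstacle is the well-definedness step: one must show that any two good-sequence representations of the same $g\in \Xi(M)^{+}$ yield equal values $\sum_{i}p(a_{i})$, and that $f_{p}$ is additive on $\Xi(M)^{+}$. Both reductions rest on an inductive application of the modular identity Proposition \ref{pro:PMequiv}(2), $p(a\oplus b)=p(a)+p(b)-p(a\odot b)$, combined with Lemma \ref{pro:basic}; this is the same combinatorial computation that underlies the proof of Theorem \ref{thm:groupext}, merely transcribed with values in $\Xi(N)$ rather than $\R$. Uniqueness is then automatic: any unital positive group homomorphism $\Xi(M)\to \Xi(N)$ extending $p$ must coincide with $f_{p}$ on every finite sum of elements of $M$, and hence on $\Xi(M)^{+}$ by the good-sequence decomposition, and finally on $\Xi(M)=\Xi(M)^{+}-\Xi(M)^{+}$ by group-homomorphism property.
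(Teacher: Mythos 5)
Your proposal follows essentially the same route as the paper's proof: the paper likewise extends $p$ to the lattice ordered monoid of good sequences by summing the images $p(a_i)$ of the components, verifies additivity on the positive cone by the telescoping computation driven by Proposition \ref{pro:PMequiv}(2) together with the good-sequence identity $a_i\odot a_{i+1}=a_{i+1}$, and then passes to all of $\Xi(M)$ by differences, with uniqueness exactly as you describe. The only detail you leave implicit is the explicit formula for the good sequence $\mathbf{a}+b_0$, namely $(a_1\oplus b,\; a_2\oplus(a_1\odot b),\;\dots,\; a_n\oplus(a_{n-1}\odot b),\; a_n\odot b)$, which is what makes the telescoping cancellation go through.
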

 \begin{proof}
 Let $p\colon M\to N$ be a probability map and let $A_{M}$ and $A_{N}$ be the lattice ordered monoids of \emph{good sequences} \cite[Chapter 2.2]{CignoliOttavianoMundici00} of $M$ and $N$, respectively. For any good sequence $\mathbf{a}=(a_m)_{m\in \N}\in M^{\N}$ there is some $n\in \N$ such that $a_m=0$ for all $m > n$. For any $a\in M$, let $a_0\coloneqq (a,0,0,\dots)$ be the good sequence in $A_{M}$ associated with $a$.
 
   Put 
\[
p'(a)\coloneqq p(a)_0, \quad a\in M.
\]
Observe that $p'$ is a probability map from $M$ to an isomorphic copy of $N$, the MV-algebra $\{b_0 \mid b \in N\}$.

 We define a map $\hat{p}\colon A_{M} \to A_{N}$ as 
  \[
 \hat{p}(\mathbf{a})\coloneqq \sum_{i=1}^n p'(a_i), \quad \mathbf{a}=(a_{1},\dots,a_{n},0,\dots) \in A_{M},
 \]
where  $\sum$ on the right-hand side is the sum of good sequences in $A_{N}$. 
Our goal is to prove that $\hat{p}$ is a unital monoid homomorphism. 
We get \[\hat{p}(0_0)=p'(0)=p(0)_0=0_0\] and, analogously, $\hat{p}(1_0)=1_0$. In the next step we will show that the identity
\begin{equation}\label{eq:monoidhom}
\hat{p}(\mathbf{a}+b_0) = \hat{p}(\mathbf{a}) + \hat{p}(b_0)
\end{equation}
holds true for every $\mathbf{a}=(a_{1},\dots,a_{n},0,\dots) \in A_{M}$ and  every $b\in M$. Put
\begin{align*}
	a'_1 & \coloneqq a_1\oplus b,\\
	a'_2 &\coloneqq a_2\oplus (a_1 \odot b),\\
	a'_3 &\coloneqq a_3\oplus (a_2 \odot b),\\
	&\vdots \\
	a'_n &\coloneqq a_n\oplus (a_{n-1}\odot b),\\
	a'_{n+1} &\coloneqq a_n\odot b.	
\end{align*}	
It is tedious but straightforward to show that $\ba+b_0=(a'_1,\dots,a'_{n+1},0,\dots)$. Using the definition of $\hat{p}(\ba+b_0)$, we can write
\begin{equation}\label{eq:monprob}
\hat{p}(\ba+b_0)= \sum_{i=1}^{n+1}p'(a'_i)
=p'(a_1\oplus b) + p'( a_2\oplus (a_1 \odot b)) +\dots + p'(a_n\odot b).
\end{equation}
The elements of a good sequence $\mathbf{a}$ satisfy the identity
\[
a_{i} \odot a_{i+1}=a_{i+1}, \quad i=1,\dots,n-1.
\]
Since $p'$ is a probability map we can apply Proposition \ref{pro:PMequiv}(2) to each summand in \eqref{eq:monprob} to get\begin{align*}
	& p'(a_1)+p'(b)-p'(a_1\odot b) \\	
	& + p'(a_2)+p'(a_1\odot b)-p'(\underbrace{a_2\odot a_1}_{a_2}\odot b) \\
	& + p'(a_3)+p'(a_2\odot b)-p'(\underbrace{a_3\odot a_2}_{a_3}\odot b) \\
	& + \dotsb \\
	& + p'(a_n)+p'(a_{n-1}\odot b)-p'(\underbrace{a_n\odot a_{n-1}}_{a_n}\odot b) \\
	& + p'(a_n\odot b)\\
	&= p'(a_1)+\dotsb + p'(a_n) + p'(b)\\
	&=\hat{p}(\ba)+\hat{p}(b_0).
\end{align*}
Since \eqref{eq:monoidhom} is satisfied for every $\ba\in A_{M}$ and $b\in M$, it follows that 
\[
\hat{p}(\ba + \bb) = \hat{p}(\ba) + \hat{p}(\bb)
\]
  holds for all  $\ba,\bb\in A_{M}$. Hence, $\hat{p}\colon A_{M}\to A_{N}$ is a unital monoid homomorphism. 
 
 Let $\Xi(M)$ and $\Xi(N)$ be the enveloping $\ell$-groups associated with MV-algebras $M$ and $N$, respectively. By \cite[Lemma 7.1.5]{CignoliOttavianoMundici00}, $\Xi(M)^{+}$ and $A_{M}$ are isomorphic as unital lattice ordered Abelian monoids. This means that we can identify the map $\hat{p}$ with a unital monoid homomorphism $\Xi(M)^{+}\to \Xi(N)^{+}$.

Now, every element $x\in \Xi(M)$ can be written as $y-z$, for some $y,z\in \Xi(M)^+$ (see \cite[Proposition 1.3]{Goodearl86}). Put $f_p(x)\coloneqq \hat{p}(y)-\hat{p}(z)$, for all $x\in \Xi(M)$  and some $y,z\in \Xi(M)^+$ satisfying $x=y-z$. We can routinely show that $f_p(x)$ does not depend on the choice of $y$ and $z$ and that the map $f_p\colon \Xi(M)\to \Xi(N)$ is a unital group homomorphism. Since $f_{p}(x)\in \Xi(N)^+$ for all $x\in \Xi(M)^+$, the map $f_p$ is positive and, by construction, it is the unique map satisfying $f_{p}(a) = p(a)$, for all $a\in M$.
  \end{proof}

Recall that every semisimple MV-algebra $M$ is isomorphic to the MV-subalgebra $M^{*}$ of $C(\Max M)$, where $^{*}$ is the isomorphism from Theorem~ \ref{thm:Holder}. 
 \begin{theorem}\label{thm:extoperator}
 Let $M$ and $N$ be semisimple MV-algebras and $p\colon M\to N$ be a~probability map. There exists a unique unital positive linear operator \[L_{p}\colon C_{\R}(\Max M)\to C_{\R}(\Max N)\]
 such that $p(a)^{*}=L_{p}(a^{*})$, for every $a\in M$.
 \end{theorem}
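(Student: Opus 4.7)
The plan is to construct $L_p$ fibrewise over $\Max N$, reducing the theorem to the scalar case treated in Theorem \ref{thm:intrep}. For each $\frakn\in\Max N$, the map $\sigma_{\frakn}\colon a\mapsto p(a)^{*}(\frakn)$ is a state of $M$: Proposition \ref{pro:PMequiv}(3) applied to any $a,b\in M$ with $a\odot b=0$ yields $p(a\oplus b)=p(a)\oplus p(b)$ together with $p(a)\odot p(b)=0$ in $N$, and evaluation at $\frakn$ (the scalar state $s_{\frakn}$ of $N$ from \eqref{def:statemorph}) then adds the values, so the defining condition \eqref{eq:add} is satisfied; $\sigma_{\frakn}(1)=1$ is immediate from \ref{P3}. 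Theorem \ref{thm:intrep}(1) extends each $\sigma_{\frakn}$ uniquely to a functional $L_{\frakn}\in\calL^{+}(C_{\R}(\Max M))$, and I define $L_p(a)(\frakn)\coloneqq L_{\frakn}(a)$ for $a\in C_{\R}(\Max M)$ and $\frakn\in\Max N$.

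Pointwise in $\frakn$, linearity, positivity, and $L_p(1)=1$ are inherited from the corresponding properties of each $L_{\frakn}$. For $a\in M$, $L_p(a^{*})(\frakn)=L_{\frakn}(a^{*})=\sigma_{\frakn}(a)=p(a)^{*}(\frakn)$, so the required extension identity $L_p(a^{*})=p(a)^{*}$ holds and, in particular, $L_p(a^{*})\in C(\Max N)$ for every $a\in M$.

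The key remaining step is to verify that $L_p(a)$ belongs to $C_{\R}(\Max N)$ for every $a\in C_{\R}(\Max M)$. The $\R$-linear span $V$ of $M^{*}$ is a vector sublattice of $C_{\R}(\Max M)$ containing constants and separating points (by Theorem \ref{thm:Holder}), hence uniformly dense by the Stone-Weierstrass-Kakutani theorem. On $V$, linearity of $L_p$ together with the previous paragraph already gives $L_p(V)\subseteq C_{\R}(\Max N)$. Unitality and positivity of each $L_{\frakn}$ yield the uniform estimate $\abs{L_{\frakn}(a)-L_{\frakn}(b)}\leq L_{\frakn}(\abs{a-b})\leq \norm{a-b}$, so any uniform approximation $a_n\to a$ with $a_n\in V$ produces uniform convergence $L_p(a_n)\to L_p(a)$ on $\Max N$, placing $L_p(a)$ in $C_{\R}(\Max N)$ as a uniform limit of continuous functions. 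Hence $L_p\in\calL^{+}(C_{\R}(\Max M),C_{\R}(\Max N))$.

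Uniqueness is then automatic: any $L\in\calL^{+}(C_{\R}(\Max M),C_{\R}(\Max N))$ satisfying $L(a^{*})=p(a)^{*}$ for all $a\in M$ agrees with $L_p$ on $M^{*}$, hence on $V$ by linearity, and hence on all of $C_{\R}(\Max M)$ by the same sup-norm bound applied evaluation-by-evaluation. The main obstacle is assembling the density-plus-continuity step cleanly; an alternative route would first invoke Lemma \ref{lem:extgroup} to obtain $f_p\colon\Xi(M)\to\Xi(N)$ and then extend it through the divisible hull of $\Xi(M)$ to $C_{\R}(\Max M)$, but the fibrewise argument above exploits Theorem \ref{thm:intrep} as a black box and avoids revisiting the divisibility bookkeeping.
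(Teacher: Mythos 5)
Your argument is correct in substance and takes a genuinely different route from the paper. The paper works globally: it first extends $p$ to a unital positive group homomorphism between the enveloping $\ell$-groups via good sequences (Lemma \ref{lem:extgroup}), passes to the divisible hulls to obtain a $\Q$-linear contraction, and only then extends by density. You work fibrewise over $\Max N$: your states $\sigma_{\frakn}=s_{\frakn}\circ p$ are exactly the dual states $p_{\frakn}$ of Definition \ref{def:dual}, each is fed into the scalar extension theorem (Theorem \ref{thm:intrep}(1)), and the only new work is checking that $\frakn\mapsto L_{\frakn}(a)$ is continuous, which your uniform bound $\abs{L_{\frakn}(a)-L_{\frakn}(b)}\leq\norm{a-b}$ together with density handles. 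What this buys is a short proof that uses the scalar Riesz-type representation as a black box and makes the connection with Section \ref{sec:Adj} transparent; what it costs is that all the $\ell$-group bookkeeping is hidden inside the citation of Theorem \ref{thm:intrep}, whereas the paper's construction is self-contained modulo Lemma \ref{lem:extgroup} and directly exhibits the intermediate objects ($G'$, $f'$) that are reused in the proof of Corollary \ref{thm:extremestate}.

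One auxiliary claim needs repair: the $\R$-linear span $V$ of $M^{*}$ is in general \emph{not} a vector sublattice of $C_{\R}(\Max M)$. For $M=F_{\mathrm{MV}}(1)$ every element of $V$ is piecewise linear with rational breakpoints, yet $(\sqrt{2}\,\id)\vee 1$ has a breakpoint at $1/\sqrt{2}$ and so lies outside $V$, although $\sqrt{2}\,\id$ and $1$ both belong to $V$. The density you actually need is nevertheless true: $V$ contains the $\Q$-linear span $G'$ of the enveloping $\ell$-group $\Xi(M^{*})$, which \emph{is} a sublattice (clear denominators) and is norm-dense by the lattice form of the Stone--Weierstrass theorem --- precisely step (3) of the paper's proof. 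With $V$ replaced by $G'$ (or with density of $V$ justified through $G'$), your uniform-convergence argument and your uniqueness argument go through unchanged.
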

\begin{proof}
We will generalise Panti's proof of Proposition 1.1 in \cite{Panti08_InvMeasures}.
Given $p\in\calP(M,N)$ we can define a probability map $p^{*}$ from $M^{*}\subseteq C_{\R}(\Max M)$ into $N^{*}\subseteq C_{\R}(\Max N)$ by putting $p^{*}(a^{*})\coloneqq p(a)^{*}$, for all $a\in M$. We will show how to lift $p^{*}\colon M^{*}\to N^{*}$ uniquely to a unital positive linear operator $L_{p}\colon C_{\R}(\Max M)\to C_{\R}(\Max N)$. The extension consists of three steps.

(1) By Lemma \ref{lem:extgroup} we extend uniquely $p^{*}$ to a unital positive group homomorphism $f\colon G\to H$, where $G\coloneqq \Xi(M^{*})$ and $H\coloneqq \Xi(N^{*})$ are the~enveloping unital $\ell$-groups of $M^{*}$ and $N^{*}$, respectively.

(2) Since $G$ a torsion-free group, it embeds as an $\ell$-group into its divisible hull $G'$ \cite[Lemme 1.6.8, Proposition 1.6.9]{BigardKeimelWolfenstein77}. Since $M$ is semisimple its enveloping $\ell$-group is necessarily Archimedean. Thus, the elements of $G$ are  functions $a\colon \Max M\to \R$, so that $G'$ can be identified with the following Riesz space over $\Q$,
\[
G'=\{q\cdot a \mid a\in G, q\in \Q\}.
\]
We define $H'$ in a completely analogous way to $G'$. Further, one can routinely check that it is correct to define a map $f'\colon G'\to H'$ by putting $f'(q\cdot a)\coloneqq q\cdot f(a)$, for all $q\in \Q$ and $a\in G$. Clearly, $f'$ is the unique unital positive $\Q$--linear operator $G'\to H'$  extending $f$. Equipped with the sup norm $\norm{.}$, the Riesz spaces $G'$ and $H'$ become normed Riesz spaces over $\Q$. We claim that $f'$ is continuous.

To prove the claim, let $a\in G'$ and choose any strictly positive $q\in \Q$ such that $q\geq \norm{a}$. Then $\norm{\frac aq}=\frac 1q\norm{a}\leq 1$ yields $\norm{f'(\tfrac aq)}\leq 1$ by monotonicity of $f'$, which implies the inequality
\[
\norm{f'(a)}=q\norm{f'(\tfrac aq)}\leq q.
\]
Hence, $\norm{f'(a)} \leq \inf \{q\in \Q \mid q> \norm{a}\}=\norm{a}$. This means that $f'$ is continuous as this inequality holds for any $a,b\in G'$:
\[
\norm{f'(a)-f'(b)}=\norm{f'(a-b)} \leq \norm{a-b}.
\]

(3) The lattice version of Stone-Weierstrass theorem says that $G'$ is a norm-dense subspace of $C_{\R}(\Max M)$. For any $a\in C_{\R}(\Max M)$ we can find a~ sequence $(a_{n})$ in $G'$ such that $\norm{a_{n}-a}\to 0$ when $n\to \infty$. Thus, we can unambiguously define \[L(a)\coloneqq \lim_{n\to \infty} f'(a_{n}), \quad a\in C_{\R}(\Max M).\]
By density the mapping $L$ is the unique continuous linear operator extending~$f'$ to $C_{\R}(\Max M)$. Since for any $a\geq 0$ we can find a sequence of~positive elements in~$G'$ converging to $a$, the operator $L$ is positive. The~proof is finished by putting $L_{p}\coloneqq L$ and observing that it is the~desired extension of~$p$. 
\end{proof}
\begin{remark}
The conclusion of Theorem \ref{thm:extoperator} remains unchanged when $M$ is not semisimple. This is based on a certain generalisation of Proposition \ref{pro:maxsemquo} whose proof we omit, and which says that $\calP(M,N)$ and $\calP(M/\Rad M,N)$ are in bijection. Hence, one obtains a 1-1 correspondence between 
\begin{itemize}
\item probability maps $p\colon M\to N$ and
\item unital positive linear operators  $C_{\R}(\Max (M/\Rad M))\to C_{\R}(\Max N)$.
\end{itemize}

\end{remark}
In order to obtain a bijection between $\calL^{+}(C_{\R}(\Max M), C_{\R}(\Max N))$ and $\calP(M,N)$, we will assume that the codomain MV-algebra $N$ is of the form $C(Y)$ for some compact Hausdorff space~$Y$. Let $L\in \calL^{+}(C_{\R}(\Max M), C_{\R}(Y))$. Since $L$ is monotone and $C(Y)\subseteq C_{\R}(Y)$, by restricting the operator $L$ to~$M^{*}$ we obtain a probability map $M^{*}\to C(Y)$. The following result then generalises the first part of Theorem \ref{thm:intrep}.

\begin{lemma}\label{thm:statesoperators}
 Let $M$ be a semisimple MV-algebra and $Y$ be a compact Hausdorff space. Let 
 \[h\colon \calL^{+}(C_{\R}(\Max M), C_{\R}(Y))\to\calP(M,C(Y))\]
 be defined by
  \[
  h(L)(a) \coloneqq L(a^{*}),
  \]
 for all $L\in \calL^{+}(C_{\R}(\Max M), C_{\R}(Y))$ and every $a\in M$. Then $h$ is an affine isomorphism.
\end{lemma}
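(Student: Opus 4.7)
The plan is to combine Theorem \ref{thm:extoperator} with the standard identification $\Max C(Y)\cong Y$. Since $Y$ is compact Hausdorff and $C(Y)$ separates points (Example \ref{ex:MVcont}), evaluation at points gives a homeomorphism $Y\to\Max C(Y)$, under which $C_{\R}(\Max C(Y))$ is identified with $C_{\R}(Y)$ and the ${}^{*}$-representation of Theorem \ref{thm:Holder} applied to $C(Y)$ becomes the inclusion $C(Y)\hookrightarrow C_{\R}(Y)$. In particular $C(Y)$ is semisimple, and Theorem \ref{thm:extoperator} applies to it as a codomain.

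\textbf{Well-definedness, affineness, injectivity.} First I will check that $h$ lands in $\calP(M,C(Y))$ and is affine. For $L\in\calL^{+}(C_{\R}(\Max M),C_{\R}(Y))$ and $a\in M$, the inequality $0\le a^{*}\le 1$ together with positivity and unitality of $L$ gives $0\le L(a^{*})\le L(1)=1$, so $h(L)(a)\in C(Y)$. Unitality $h(L)(1)=1$ is immediate. For $a,b\in M$ with $a\odot b=0$, the ${}^{*}$-map being an MV-homomorphism yields $a^{*}\odot b^{*}=0$, hence $(a\oplus b)^{*}=a^{*}+b^{*}$ in $C_{\R}(\Max M)$; linearity of $L$ then gives $h(L)(a\oplus b)=h(L)(a)+h(L)(b)$, and Proposition \ref{pro:PMequiv}(4) concludes $h(L)\in\calP(M,C(Y))$. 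Affineness of $h$ is immediate from its formula being pointwise linear in $L$. Injectivity follows from the uniqueness clause of Theorem \ref{thm:extoperator}: if $h(L_{1})=h(L_{2})$, then both operators extend the same probability map $M^{*}\to C(\Max C(Y))$ to $C_{\R}(\Max M)$, so they must agree.

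\textbf{Surjectivity.} Given $p\in\calP(M,C(Y))$, I will apply Theorem \ref{thm:extoperator} with $N\coloneqq C(Y)$ to obtain the unique unital positive linear operator $L_{p}\colon C_{\R}(\Max M)\to C_{\R}(\Max C(Y))$ with $p(a)^{*}=L_{p}(a^{*})$ for every $a\in M$. Transporting the codomain along the homeomorphism $\Max C(Y)\cong Y$ turns $L_{p}$ into a member of $\calL^{+}(C_{\R}(\Max M),C_{\R}(Y))$; and since the same transport identifies $p(a)^{*}$ with $p(a)$ itself, this gives $h(L_{p})=p$.

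\textbf{Main obstacle.} The delicate point is the identification $\Max C(Y)\cong Y$ and the verification that under the evaluation homeomorphism the ${}^{*}$-representation on $C(Y)$ is literally the identity. One must keep the bookkeeping clean so that the operator supplied by Theorem \ref{thm:extoperator} can be read as having codomain $C_{\R}(Y)$ and so that $h(L_{p})=p$ holds on the nose, not merely up to isomorphism. Beyond this, everything reduces to the extension and uniqueness results already in hand.
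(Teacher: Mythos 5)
Your proposal is correct and follows essentially the same route as the paper: the paper's (much terser) proof likewise establishes convexity via Proposition \ref{pro:convex}, notes that $h$ is affine, and derives bijectivity from the existence and uniqueness statements of Theorem \ref{thm:extoperator}, with the identification $\Max C(Y)\cong Y$ handled in the remarks surrounding the lemma. You have merely filled in the bookkeeping (well-definedness via Proposition \ref{pro:PMequiv}(4) and the transport along $\Max C(Y)\cong Y$) that the paper leaves implicit.
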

\begin{proof}
The sets $\calP(M,C(Y))$ and $\calP(M^{*},C(Y))$  are convex (Proposition \ref{pro:convex}), the map~$h$ is clearly affine, and it is bijective as a consequence of Theorem~\ref{thm:extoperator}. Thus, $h$ is an affine isomorphism. 
\end{proof}

We will take up a natural question of describing the extreme points of~convex set $\mathcal{P}(M,C(Y))$. This task will be presented as a direct application of Lemma~\ref{thm:statesoperators} based on the known characterisation of extreme points for the~convex set of operators $\calL^{+}(C_{\R}(\Max M), C_{\R}(Y))$; see Theorem \ref{thm:extremeoperator}. In~fact, since the map $h$ from Lemma \ref{thm:statesoperators} is an affine isomorphism,  it holds that $L\in \ext \calL^{+}(C_{\R}(\Max M), C_{\R}(Y))$ if, and only if, $h(L)\in \ext \calP(M,C(Y))$.

\begin{corollary}\label{thm:extremestate}
 Let $M$ be a semisimple MV-algebra and $Y$ be a compact Hausdorff space. For any $p\in \mathcal{P}(M,C(Y))$, these assertions are equivalent:
\begin{enumerate}
\item $p$  is an extreme point of  $\calP(M,C(Y))$.
\item $p$ is an MV-homomorphism.
\end{enumerate}
\end{corollary}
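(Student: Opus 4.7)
The plan is to chain together the affine isomorphism $h$ of Lemma~\ref{thm:statesoperators} with the Ellis-Phelps description of extreme operators. Since $Y$ is compact Hausdorff, $\Max C(Y)$ can be identified with $Y$, so the codomains in Theorem~\ref{thm:extremeoperator} and Lemma~\ref{thm:statesoperators} match. Under $h$, extreme points of $\calP(M,C(Y))$ correspond bijectively with extreme points of $\calL^{+}(C_{\R}(\Max M),C_{\R}(Y))$, and by Theorem~\ref{thm:extremeoperator} the latter are precisely the lattice homomorphisms. The task thus reduces to proving: for $L\in\calL^{+}(C_{\R}(\Max M),C_{\R}(Y))$ and $p=h(L)$, the operator $L$ is a lattice homomorphism if and only if $p$ is an MV-homomorphism.

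For $(1)\Rightarrow(2)$, assume $L$ preserves lattice operations. Using that $^{*}\colon M\to M^{*}\subseteq C_{\R}(\Max M)$ is itself an MV-homomorphism,
\[
p(a\vee b) = L((a\vee b)^{*})=L(a^{*}\vee b^{*})=L(a^{*})\vee L(b^{*})=p(a)\vee p(b),
\]
and analogously $p(a\wedge b)=p(a)\wedge p(b)$. Combined with the identity $p(\neg a)=\neg p(a)$ (automatic for probability maps), with \ref{MV2}, \ref{MV3}, and with Proposition~\ref{pro:PMequiv}(4) applied to the disjoint pair $a,\,b\wedge\neg a$, one obtains
\[
p(a\oplus b)=p(a\oplus(b\wedge\neg a))=p(a)\oplus p(b\wedge\neg a)=p(a)\oplus(p(b)\wedge\neg p(a))=p(a)\oplus p(b).
\]
Hence $p$ is an MV-homomorphism.

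For $(2)\Rightarrow(1)$, assume $p$ is an MV-homomorphism. For each $y\in Y$ the evaluation $p_{y}(a)\coloneqq p(a)(y)$ is an MV-homomorphism $M\to[0,1]$, so by Theorem~\ref{thm:Holder} together with \eqref{eq:stspacemax} there is a unique $f(y)\in\Max M$ satisfying $p(a)(y)=a^{*}(f(y))$ for every $a\in M$. Since $\Max M$ carries the weak topology generated by $\{a^{*}\mid a\in M\}$ and each function $y\mapsto a^{*}(f(y))=p(a)(y)$ lies in $C(Y)$, the map $f\colon Y\to\Max M$ is continuous. The composition operator $\widetilde{L}\colon b\mapsto b\circ f$ is then a unital positive lattice homomorphism $C_{\R}(\Max M)\to C_{\R}(Y)$ with $\widetilde{L}(a^{*})=p(a)$ for all $a\in M$; by uniqueness of the extension in Theorem~\ref{thm:extoperator} (equivalently, by injectivity of $h$) we conclude $L=\widetilde{L}$, which is a lattice homomorphism.

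The main technical point lies in the $(2)\Rightarrow(1)$ direction, where one must recover $L$ as a composition operator from pointwise MV-homomorphisms. This rests on the semisimple identification $M\cong M^{*}\subseteq C(\Max M)$ and the description of the topology on $\Max M$, both furnished by Theorem~\ref{thm:Holder}.
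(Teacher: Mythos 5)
Your proposal is correct, and its overall skeleton coincides with the paper's: transport the problem through the affine isomorphism $h$ of Lemma~\ref{thm:statesoperators} and invoke the Ellis--Phelps characterisation (Theorem~\ref{thm:extremeoperator}) of extreme points of $\calL^{+}(C_{\R}(\Max M),C_{\R}(Y))$ as lattice homomorphisms. The direction $(1)\Rightarrow(2)$ is essentially the paper's argument, only spelled out in more detail via \ref{MV2} and \ref{P2}. Where you genuinely diverge is in $(2)\Rightarrow(1)$. The paper re-enters the three-step extension of Theorem~\ref{thm:extoperator}: it observes that the unique unital positive group homomorphism $f\colon\Xi(M^{*})\to\Xi(C(Y))$ extending an MV-homomorphism is an $\ell$-group homomorphism, verifies that the induced map $f'$ on the divisible hulls still preserves $\vee$ and $\wedge$ on rational multiples, and concludes by norm-density and continuity of the lattice operations. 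You instead bypass the extension machinery entirely: each evaluation $p_{y}$ is an MV-homomorphism $M\to[0,1]$, hence equals $s_{f(y)}$ for a unique $f(y)\in\Max M$; the map $f$ is continuous because $\Max M$ (compact Hausdorff, with $M^{*}$ separating) carries the initial topology of the functions $a^{*}$ and $a^{*}\circ f=p(a)\in C(Y)$; the composition operator $b\mapsto b\circ f$ is then a unital positive lattice homomorphism agreeing with $p$ on $M^{*}$, so it must equal $L_{p}$ by the uniqueness clause of Theorem~\ref{thm:extoperator}. Your route trades the algebraic bookkeeping through good sequences and divisible hulls for an appeal to the Yosida-type duality between $[0,1]$-valued MV-homomorphisms and maximal ideals plus uniqueness of the extension; as a bonus it lands directly on clause (3) of Theorem~\ref{thm:extremeoperator} and anticipates item (5) of the paper's later Theorem~\ref{thm:dualschar}. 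Both arguments are sound.
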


\begin{proof}
For any $p\in \calP(M,C(Y))$, define
\[
L_{p} \coloneqq h^{-1}(p),
\]
where $h$ is the affine isomorphism from Lemma \ref{thm:statesoperators}. We will show  that the~implication (1) $\Rightarrow$ (2) holds. Assume $p\in \ext \calP(M,C(Y))$. Hence, $L_{p}\in \ext \calL^{+}(C_{\R}(\Max M), C_{\R}(Y))$, which implies that $L_{p}$ is a lattice homomorphism (Theorem \ref{thm:extremeoperator}), and this means that $p\colon M\to C(Y)$ is a 
lattice homomorphism as well. Therefore $p$ satisfies the identity $p(a\oplus b)=p(a)\oplus p(b)$ and~it is an MV-homomorphism.
 
In order to prove the converse (2) $\Rightarrow $ (1), let $p\in \calP(M,C(Y))$ be an~MV-homomorphism. By Theorem \ref{thm:extremeoperator} it suffices to check that the operator $L_{p}$ is a~lattice homomorphism. Let $G\coloneqq \Xi(M^{*})$ and $H\coloneqq \Xi(C(Y))$ be the~enveloping $\ell$-groups of $M^{*}$ and $C(Y)$, respectively, and put $p^{*}(a^{*})\coloneqq p(a)$, for all $a\in M$. Then the unique extension of $p^{*}$ to a unital positive group homomorphism $f\colon G\to H$ according to Lemma \ref{lem:extgroup} is in fact an $\ell$-group homomorphism; see \cite[Chapter 7]{CignoliOttavianoMundici00}. Following the proof of Theorem \ref{thm:extoperator} we need to verify that the extended linear operator $f'\colon G'\to H'$, where $G'$ and~$H'$ are divisible hulls of $G$ and $H$, respectively, is a lattice homomorphism. Since $f'(q\cdot a)=q\cdot f(a)$ for all $q\in \Q$ and $a\in G$, we can routinely check that 
\[
f'(\tfrac am \vee \tfrac bn )=f' (\tfrac am ) \vee f'(\tfrac bn)
\]
for all $a,b\in G$ and positive integers $m,n$. The analogous identity holds for~$\wedge$. Hence, $f'\colon G'\to H'$ is a lattice homomorphism and so is its extension $L_{p}$, by~continuity of the lattice operations. Since  
\[
L_{p}\in \ext \calL^{+}(C_{\R}(\Max M), C_{\R}(Y)),
\]
 it results that $p$ is an extreme point of $\calP(M,C(Y))$ by Lemma \ref{thm:statesoperators}. 
 \end{proof}
 \begin{corollary}\label{cor:ext}
 For every semisimple MV-algebra $M$ and a compact Hausdorff space $Y$, the convex set $\calP(M,C(Y))$ has an extreme point.
 \end{corollary}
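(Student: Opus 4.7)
The plan is to combine the characterisation of extreme points given by Corollary \ref{thm:extremestate} with the concrete MV-homomorphism already constructed in the proof of Proposition \ref{pro:convex}. By Corollary \ref{thm:extremestate} the extreme boundary of $\calP(M,C(Y))$ consists precisely of those probability maps that are MV-homomorphisms, so it suffices to exhibit a single MV-homomorphism $M \to C(Y)$.

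First, by Theorem \ref{thm:Holder} the semisimple MV-algebra $M$ admits at least one MV-homomorphism $h\colon M\to [0,1]$; indeed $\Max M\neq\emptyset$ and any $\frakm\in\Max M$ supplies such an $h$ via $h(a)\coloneqq h_{\frakm}(a/\frakm)$. Next, I would define $p_{h}\colon M\to C(Y)$ by $p_{h}(a)(y)\coloneqq h(a)$, for all $a\in M$ and $y\in Y$, so that $p_{h}(a)$ is the constant function with value $h(a)$. As noted in the proof of Proposition \ref{pro:convex}, this $p_{h}$ is an MV-homomorphism: the constant-function embedding $[0,1]\hookrightarrow C(Y)$ preserves $\oplus$, $\neg$, and $0$, and the composition of two MV-homomorphisms is an MV-homomorphism.

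Finally, I would invoke Corollary \ref{thm:extremestate} with $p\coloneqq p_{h}$: since $p_{h}\in\calP(M,C(Y))$ is an MV-homomorphism, it is an extreme point of $\calP(M,C(Y))$. This shows $\ext\calP(M,C(Y))\neq\emptyset$, completing the proof.

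There is no real obstacle here; the statement is essentially a one-line consequence of Corollary \ref{thm:extremestate} together with the already-established nonemptiness argument from Proposition \ref{pro:convex}. The only thing worth double-checking is that $p_{h}$ does land in $C(Y)$ (it does, as constant functions are continuous) and that the characterisation of extreme points applies unchanged (which it does, since $M$ is assumed semisimple, exactly as required in Corollary \ref{thm:extremestate}).
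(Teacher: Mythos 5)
Your proposal is correct and follows exactly the paper's own argument: take the constant-function MV-homomorphism $p_{h}$ from the proof of Proposition \ref{pro:convex} and apply Corollary \ref{thm:extremestate} to conclude it is an extreme point. Nothing to add.
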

 \begin{proof}
  The map $p_{h}\colon M\to C(Y)$ defined in \eqref{eq:proeq} is an MV-homomorphism and Corollary \ref{thm:extremestate} says that $p_{h}$ is an extreme point of $\calP(M,C(Y))$.
 \end{proof}
Extreme boundary of $\calP(M,C(Y))$ can be described  in special cases.
 
 \begin{example}[Stochastic matrices]\label{ex:stmatrix}
 Recall that a \emph{stochastic matrix} is a~real square matrix with nonnegative entries satisfying the condition that the sum of every row is equal to $1$.
 Let $n \in \N$ and consider MV-algebras $M= N=[0,1]^{n}$. Clearly $N$ is of the form $C(Y)$ for $Y\coloneqq \{1,\dots,n\}$, so Corollary \ref{thm:extremestate} applies. It is even possible to fully describe the convex set $\calP([0,1]^{n},[0,1]^{n})$ as~follows. For any $p\in \calP([0,1]^{n},[0,1]^{n})$, there is a unique unital positive linear mapping $L_{p}\colon \R^{n} \to \R^{n}$ such that $L_{p}(a)=p(a)$, for all $a\coloneqq (a_{1},\dots,a_{n})\in [0,1]^{n}$.  This means that there exists a unique stochastic matrix $\mathbf{S}$ of order $n$ such that
    \begin{equation}\label{eq:stma}
    p(a)=(\mathbf{S}a^{\intercal})^{\intercal},\qquad a\in [0,1]^{n}.
    \end{equation}
  Thus, $\calP([0,1]^{n},[0,1]^{n})$ is affinely isomorphic to the $n(n-1)$-dimensional convex polytope in $\R^{n^{2}}$ of all stochastic matrices of order $n$. It is known that the extreme points of this polytope are precisely those stochastic matrices whose entries are $\{0,1\}$-valued only; see \cite[Chapter I \S 4]{Schaefer74}. Since there are $n^{n}$ of such extreme points and $n^{n}>n(n-1)$ whenever $n>1$, this means that $\calP([0,1]^{n},[0,1]^{n})$ cannot be an $n(n-1)$-dimensional (Bauer) simplex.  
 \end{example} 
 
 The previous example shows that the set of all probability maps $\calP(M,N)$ fails to be a simplex even when $M$ and $N$ possess finitely-many maximal ideals. In general, almost none of the characteristic features of the Bauer simplex $\St M$ is preserved, when passing to a more complicated collection of~maps $\calP(M,N)$.
 
 \subsection{Integral representation}\label{sec:IR}
 Every probability map from $M$ into the standard MV-algebra $[0,1]$ is Lebesgue integral \eqref{thm:int} with respect to a unique regular Borel measure over $\Max M$. This is the content of representation theorem whose proof relies on the extension of states onto positive linear functionals and the subsequent application of Riesz theorem for Banach lattice $C_{\R}(\Max M)$; see \cite{Panti08_InvMeasures,FlaminioKroupa15}. Theorem~\ref{thm:extoperator} makes it possible to pursue this path of reasoning a little further, since the~theorem says that a~probability map between semisimple MV-algebras has a unique extension to a~positive operator between the associated Banach lattices of continuous functions. Therefore it is not unreasonable to ask a~question like this: \emph{Does a probability map allow for a certain integral representation?} Theory of vector measures \cite{DiestelUhl77} provides the right framework for stating this problem precisely. In that follows only vector measures with values in spaces of the form $C_{\R}(X)$ are considered.
 
 Let $X$ and $Y$ be compact Hausdorff spaces. A \emph{vector measure} is a mapping $\vec{\mu}$ from Borel sets of $X$ into $C_{\R}(Y)$ satisfying the following condition: If~$A_{1},A_{2},\dots$ is a sequence of pairwise disjoint Borel measurable subsets of~$X$, then
 \[
 \vec{\mu} ( \bigcup_{n=1}^{\infty} A_{n} ) = \sum_{n=1}^{\infty} \vec{\mu}(A_{n}),
 \]
where the sum on the right-hand side converges in the norm topology of~Banach space $C_{\R}(Y)$. We say that  a vector measure $\vec{\mu}$ is \emph{regular} when for each Borel set $A$ and every $\epsilon >0$ there exists a compact set $K$ and an open set~ $O$ such that $K\subseteq A \subseteq O$ and $\norm{\vec{\mu}}(O\setminus K)<\epsilon$, where $\norm{\vec{\mu}}$ is the so-called \emph{semivariation} of $\vec{\mu}$; see \cite[p.2]{DiestelUhl77}. The integral of $a\in C_{\R}(X)$ with respect to a $C_{\R}(Y)$-valued regular vector measure on $X$ is an almost direct generalisation of Lebesgue integral, since the construction starts from an elementary integral of simple functions as in the classical case. Thus, for any $a\in C_{\R}(X)$, the~integral
\[
\int_{X} a \;\mathrm{d}\vec{\mu} 
\]
 has a tangible meaning and the assignment
 \begin{equation*}\label{eq:intereprevec}
 L_{\vec{\mu}} (a) \coloneqq \int_{X} a \;\mathrm{d}\vec{\mu}, \qquad a\in C_{\R}(X),
\end{equation*}
determines a bounded linear operator $L_{\vec{\mu}}\colon C_{\R}(X)\to C_{\R}(Y)$. Now, the question from a previous paragraph amounts to asking for Riesz theorem for operators. Specifically: \emph{Given a bounded linear operator $L\colon C_{\R}(X) \to C_{\R}(Y)$, is there a regular vector measure $\vec{\mu}$ on the Borel sets of $X$ with values in~$C_{\R}(Y)$ such that $L= L_{\vec{\mu}}$?} It turns out that this question has a positive answer if and only if the operator $L$ is \emph{weakly compact}, that is, $L$ maps bounded sets in $C_{\R}(X)$ to relatively weakly compact subsets of $C_{\R}(Y)$. This follows from Bartle-Dunford-Schwartz theorem; see \cite[Theorem VI.5]{DiestelUhl77}. Using this result it is very easy to find a positive operator $L$ for which no representing regular vector measure exists. Indeed, consider the identity operator $C_{\R}([0,1])\to C_{\R}([0,1])$ and note that (the weak closure of) the unit ball in $C_{\R}([0,1])$ fails to be weakly compact. Hence the identity is not a weakly compact operator. 

The above discussion implies that there are probability maps having no~integral representation in the sense introduced above. However, we can still look for conditions on $C_{\R}(X)$ and $C_{\R}(Y)$ guaranteeing that any bounded linear operator between the two spaces is weakly compact. Such sufficient conditions are provided by Grothendieck's theorem, which is the main ingredient of this theorem.
\begin{theorem}\label{thm:intvecmeas}
Let $M$ be a complete MV-algebra and $N$ be a semisimple MV-algebra with metrisable $\Max N$. For every probability map $p\colon M\to N$ there is a~$C_{\R}(\Max N)$-valued regular vector measure $\vec{\mu}$ on $\Max M$ such that
\[
p(a)^{*} = \int\limits_{\Max M} a^{*} \mathrm{d}\vec{\mu}, \qquad a\in M.
\]
\end{theorem}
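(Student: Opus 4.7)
The plan is to combine Theorem \ref{thm:extoperator} with the Bartle--Dunford--Schwartz representation theorem, with Grothendieck's theorem supplying the required weak compactness. By the remark following Theorem \ref{thm:extoperator} together with the identification $\Max M\cong\Max(M/\Rad M)$, I would first reduce to the case where $M$ is semisimple (retaining completeness, since the radical is an intersection of maximal ideals). Completeness of $M$ then translates, via Mundici's $\Gamma$-functor, into Dedekind completeness of the Archimedean unital $\ell$-group $\Xi(M)$; the classical representation theorem for such groups identifies $\Xi(M)$ with $C(K)$ for some Stonean compact Hausdorff space $K$, and this $K$ is homeomorphic to $\Max M$. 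Thus $\Max M$ is extremally disconnected.

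Theorem \ref{thm:extoperator} then provides the unique unital positive linear extension $L_p\colon C_{\R}(\Max M)\to C_{\R}(\Max N)$ of $p$, satisfying $L_p(a^{*})=p(a)^{*}$ for every $a\in M$. Since $\Max M$ is Stonean, Grothendieck's theorem gives that $C_{\R}(\Max M)$ is a Grothendieck Banach space; since $\Max N$ is compact and metrisable, $C_{\R}(\Max N)$ is separable. A classical consequence of these two facts is that every bounded linear operator from a Grothendieck $C(K)$-space into a separable Banach space is weakly compact. Hence $L_p$ is weakly compact.

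The Bartle--Dunford--Schwartz theorem \cite[Theorem VI.5]{DiestelUhl77} then furnishes a regular $C_{\R}(\Max N)$-valued vector measure $\vec{\mu}$ on the Borel $\sigma$-algebra of $\Max M$ with
\[
L_p(f)=\int_{\Max M} f\;\mathrm{d}\vec{\mu},\qquad f\in C_{\R}(\Max M).
\]
Specialising to $f=a^{*}$ for $a\in M$ and combining with $L_p(a^{*})=p(a)^{*}$ yields the desired integral representation.

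The main obstacle I anticipate lies in verifying the topological claim that $\Max M$ is Stonean. Completeness of an MV-algebra is a lattice-theoretic condition, and chasing it through $M\leadsto M/\Rad M\leadsto \Xi(M/\Rad M)\leadsto C(\Max(M/\Rad M))$ to extract extremal disconnectedness of the spectrum requires careful use of the Mundici equivalence and of the structure theory for Dedekind complete Archimedean unital $\ell$-groups. Once $\Max M$ is known to be Stonean, the operator-theoretic steps are essentially routine consequences of Grothendieck's theorem and of Bartle--Dunford--Schwartz.
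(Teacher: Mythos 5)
Your overall architecture is exactly the paper's: extend $p$ to a unital positive operator $L_p$ via Theorem \ref{thm:extoperator}, show that $\Max M$ is extremally disconnected and that $C_{\R}(\Max N)$ is separable, invoke Grothendieck's theorem to get weak compactness of $L_p$, and conclude with the Bartle--Dunford--Schwartz representation. Two of your intermediate justifications need repair, though neither changes the route. First, the reduction to the semisimple case is both unnecessary and insufficiently justified as written: it is not clear that $M/\Rad M$ inherits completeness, and ``the radical is an intersection of maximal ideals'' does not by itself give that. The paper instead uses the fact that every $\sigma$-complete (a fortiori every complete) MV-algebra is already semisimple \cite[Proposition 6.6.2]{CignoliOttavianoMundici00}, so $\Rad M=\{0\}$ and there is nothing to reduce.

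Second, and more substantively, your stated reason that $\Max M$ is Stonean does not hold as written. The group $\Xi(M)$ is only a unital $\ell$-group, in general not divisible --- take $M$ to be a complete Boolean algebra, where $\Xi(M)$ consists of integer-valued functions (for $M=\{0,1\}$ it is $\Z$) --- so the Nakano--Stone representation of Dedekind complete Riesz spaces with order unit as $C(K)$ does not apply, and $\Xi(M)$ is not isomorphic to $C_{\R}(K)$ for any $K$. The correct chain, which is the one the paper uses, is: $\Xi(M)$ is Dedekind complete by Kawano's theorem \cite{Kawano94}; the extreme state space of a Dedekind complete unital group is extremally disconnected by \cite[Theorem 8.14]{Goodearl86}; and the homeomorphisms $\ext \St M \cong \ext \St \Xi(M)$ and $\ext \St M\cong \Max M$ transfer this to $\Max M$. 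With that substitution for your ``main obstacle,'' your argument coincides with the paper's proof.
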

\begin{proof} Every $\sigma$-complete MV-algebra is semisimple \cite[Proposition 6.6.2]{CignoliOttavianoMundici00}. A~fortiori, $M$ is semisimple. It follows from \cite{Kawano94} that the enveloping $\ell$-group $\Xi(M)$ is Dedekind complete. Since the space of extreme states of $M$ is homeomorphic to the space of extreme states of $\Xi(M)$, where the latter is extremally disconnected by \cite[Theorem 8.14]{Goodearl86}, the space $\Max M$ is extremally disconnected, too. Since $\Max N$ is metrisable, the Banach space $C_{\R}(\Max N)$ is separable. Then Grothendieck's theorem \cite[Corollary VI.12]{DiestelUhl77} says that any bounded linear operator $C_{\R}(\Max M)\to C_{\R}(\Max N)$ is weakly compact. In~particular, the positive operator $L\colon C_{\R}(\Max M)\to C_{\R}(\Max N)$ extending $p$ (Theorem \ref{thm:extoperator})  is weakly compact. Thus there exists a regular vector measure $\vec{\mu}$ (\cite[Corollary VI.14]{DiestelUhl77}) such that 
\[
L(a) = \int\limits_{\Max M} a \mathrm{d}\vec{\mu}, \qquad a\in C_{\R}(\Max M).
\]
Since $p(a)^{*}=L(a^{*})$ for all $a\in M$, this completes the proof. 
\end{proof}

\section{Dual maps}\label{sec:Adj}
An alternative representation of probability maps is studied in this section. A~possible motivation can be Example \ref{ex:stmatrix}, where a unique stochastic matrix~$\mathbf{S}$ is associated with a probability map $p\in \calP([0,1]^{n},[0,1]^{n})$, such that the identity \eqref{eq:stma} holds. The matrix is of the form \[
\mathbf{S}\coloneqq 
\begin{pmatrix}
\mathbf{s}_{1}\\
\vdots \\
  \mathbf{s}_{n}
\end{pmatrix}
\]
with each $\mathbf{s}_{i}\coloneqq (s_{i1},\dots,s_{in})\in [0,1]^{n}$ satisfying $\sum_{j=1}^{n}s_{ij}=1$. From here $n$ states $s_{1},\dotsc,s_{n}\in \St [0,1]^{n}$ can be defined:
\[
s_{i}(a) \coloneqq \sum_{j=1}^{n} s_{ij} \cdot a_{j}, \qquad a\coloneqq (a_{1},\dots,a_{j})\in [0,1]^{n}, \; i=1,\dots,n.
\]
 By \eqref{eq:stma} the relation between $p$ and $(s_{1},\dots,s_{n})$ is
\[
p(a)_{i} = s_{i}(a), \qquad \text{for all $a\in [0,1]^{n}$ and $i=1,\dots,n$.}
\]
Since $\Max\; [0,1]^{n}$ can be identified with $\{1,\dots,n\}$, one can think of $(s_{1},\dots,s_{n})$ as an $n$-tuple of states indexed by maximal ideals of $[0,1]^{n}$. The above construction can be carried out for any probability map. We spell out the details in the rest of this sectiom.

Let $M$ and $N$ be arbitrary MV-algebras and $p\in \calP(M,N)$. For any $\mathfrak{n} \in \Max N$, define a function $p_{\mathfrak{n}}\colon M\to [0,1]$ as
\begin{equation}\label{def:pn}
p_{\mathfrak{n}}(a) \coloneqq p(a)^{*}(\mathfrak{n}), \qquad a\in M.
\end{equation}
Since $p$ is a probability map and $^{*}\colon N\to C(\Max N)$ is an MV-homomorphism (Theorem \ref{thm:Holder}), the function  $p_{\mathfrak{n}}$ is a state of $M$.\begin{definition}\label{def:dual}
Let $p\in \calP(M,N)$. The \emph{dual} of $p$ is the mapping \[
p'\colon \Max N \to \St M\] 
such that $p'(\frakn)\coloneqq p_{\frakn}$, where $p_{\frakn}$ is as in \eqref{def:pn}. Put
\[
\calP'(M,N) \coloneqq \{p'\mid \text{$p'$ is the dual of some $p\in \calP(M,N)$} \}.
\]
\end{definition}
\begin{figure}[h]
\begin{center}
\begin{tikzpicture}[domain=0:4,scale=0.8]
\fill [fill=green!40,draw=green]
(-6,2) rectangle (-3,4);
\draw (-5.8,4.4) node {$M$};
\draw (2.5,5) node {$N$};
\draw[-] (-0.5,0) -- (5,0) node[right] {$\Max N$};
\draw[-] (-0.5,0) node[left] {$0$} -- (-0.5,4.2) node[left] {$1$};

\draw[color=blue,thick] plot (\x,{sin(\x r)+2});
\draw[-,dashed] (3,0) -- (3,2.14) node[right] {$\;p_{\mathfrak{n}}(a)$};
\filldraw[color=orange!20,draw=red] (-5.5,-1) -- ++(60:3) -- ++(-60:3) -- cycle ;
\draw (-5.3,1) node {$\St M$};

\draw[->] (3,-0.5) node[above] {$\mathfrak{n}$} to [out=250,in=270] (-4.1,-0.5) node[above] {$p_{\mathfrak{n}}$};
\draw[->] (-5,2.9) node[below] {$a$} to [out=70,in=90] (1.7,3.8) node[below] {$\textcolor{blue}{p(a)}$};
\draw (-1.5,5.5) node {$p$};
\draw (1.5,-2.5) node {$p'$};
\end{tikzpicture}
\end{center}
\caption{Probability map $p$ and its dual $p'$}
\end{figure}
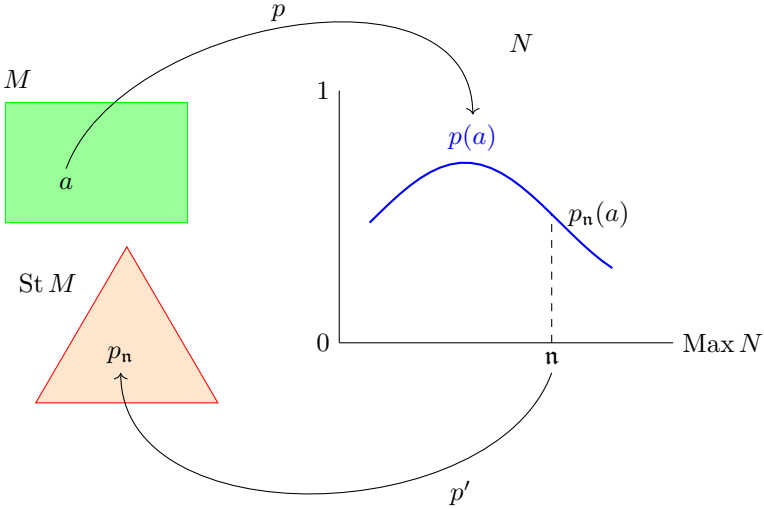
In this lemma basic properties of dual maps are collected. 
\begin{lemma}\label{lem:duals}
Let $M$ and $N$ be MV-algebras. The following hold true.
\begin{enumerate}
\item $p'$ is continuous for every  $p\in \calP(M,N)$.
\item If $N$ is semisimple, then $p\mapsto p'$ is a bijection.
\end{enumerate}
\end{lemma}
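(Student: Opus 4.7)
The plan is to reduce both claims to Theorem \ref{thm:Holder}, which tells us that the map $^{*}\colon N\to C(\Max N)$ is an MV-homomorphism, and is injective exactly when $N$ is semisimple.

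For (1), recall from \eqref{def:convstates} that the topology on $\St M$ is that of pointwise convergence. So to check continuity of $p'$ at $\frakn\in \Max N$, it suffices to show that for every fixed $a\in M$, the real-valued function
\[
\frakn \in \Max N \;\longmapsto \; p_{\frakn}(a)=p(a)^{*}(\frakn)
\]
is continuous. But $p(a)^{*}\in C(\Max N)$ by Theorem \ref{thm:Holder}, so this is immediate. Consequently, whenever a net $(\frakn_{\gamma})$ converges to $\frakn$ in $\Max N$, the net of states $(p_{\frakn_{\gamma}})$ converges pointwise on $M$ to $p_{\frakn}$, which is exactly convergence in $\St M$.

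For (2), surjectivity of $p\mapsto p'$ onto $\calP'(M,N)$ is built into the definition of $\calP'(M,N)$, so the content of the statement is injectivity. Suppose $p,q\in \calP(M,N)$ satisfy $p'=q'$. Unwrapping definitions, for every $a\in M$ and every $\frakn\in \Max N$ we have $p(a)^{*}(\frakn)=p_{\frakn}(a)=q_{\frakn}(a)=q(a)^{*}(\frakn)$, so $p(a)^{*}=q(a)^{*}$ as elements of $C(\Max N)$. Since $N$ is semisimple, the MV-homomorphism $^{*}\colon N\to N^{*}$ is an isomorphism (Theorem \ref{thm:Holder}), so $p(a)=q(a)$ for every $a\in M$, i.e., $p=q$.

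The argument is quite short; the only mild subtlety is remembering which topology $\St M$ carries and that the semisimplicity of $N$ is precisely what is needed to cancel the $^{*}$ map. No Dedekind completeness or deeper machinery is required.
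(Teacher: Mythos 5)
Your proof is correct and follows essentially the same route as the paper: part (1) reduces continuity to the pointwise-convergence topology on $\St M$ and the continuity of $p(a)^{*}$, and part (2) uses the injectivity of $^{*}$ for semisimple $N$ (you argue the contrapositive of the paper's implication, which is logically identical). No issues.
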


\begin{proof}
(1) Let $(\frakn_{\gamma})$ be a net in $\Max N$ converging to some $\frakn\in \Max N$. We~need to show that the limit of a net of states $(p'({\mathfrak{n}_{\gamma}}))$ in $\St M$ is $p'({\frakn})$. Let $a\in M$. Then, by \eqref{def:pn}, \eqref{def:convstates}, and continuity of $p(a)^{*} \in N^{*}$, 
\[
p'({\mathfrak{n}_{\gamma}})(a)=p_{\mathfrak{n}_{\gamma}}(a)=p(a)^{*}(\mathfrak{n}_{\gamma})\; \to\; p(a)^{*}(\mathfrak{n}) = p_{\mathfrak{n}}(a)=p'({\frakn})(a).
\]

(2) The assertion is trivially true when the cardinality of $\calP(M,N)$ is~strictly smaller than two. Let $p,q\in \calP(M,N)$ with $p\neq q$. Then there is $a\in M$ having the property $p(a)\neq q(a)$. Since $N$ is semisimple, the map $^{*}$ from $N$ onto $N^{*}$ is injective (Theorem \ref{thm:Holder}), which implies that there is some $\frakn\in \Max N$ satisfying $p(a)^{*}(\frakn) \neq q(a)^{*}(\frakn)$. By \eqref{def:pn} this inequality reads as
$p'_{\mathfrak{n}}(a)\neq q'_{\mathfrak{n}}(a)$. Hence, $p'\neq q'$. 
\end{proof}

The set of duals of probability maps can  easily be described in some special cases, which depend on the structure of $N$.
\subsection{Codomain $N=C(Y)$}
Let $M$ be an arbitrary semisimple MV-algebra and $N\coloneqq C(Y)$ for a compact Hausdorff space $Y$. Since $\Max C(Y)$ is homeomorphic to $Y$ (see \cite[Theorem 4.16(iv)]{Mundici11}), we will tacitly identify the elements of $Y$ with maximal ideals in ~$\Max C(Y)$. Let $f\colon Y\to \St M$ be any continuous map. Then one can routinely check that the map $p\colon M\to C(Y)$ defined by $p(a)(y)\coloneqq f(y)(a)$, for all $a\in M$ and all $y\in Y$, is a probability map and its dual is $p'=f$.   Thus, by Lemma \ref{lem:duals}, the dual of $\calP(M,C(Y))$ is
\begin{equation}\label{eq:duals}
\calP'(M,C(Y)) = \{f \mid \text{$f\colon Y \to \St M$ is continuous}\}.
\end{equation}
The set $\calP'(M,C(Y)) $ is clearly a nonempty convex subset of the real linear space $(\St M)^{Y}$. In particular, $\calP'(M,C(Y))$ is affinely isomorphic to the convex set $\calP(M,C(Y))$ under the bijection $p\mapsto p'$, which is a direct consequence of Lemma \ref{lem:duals} and the definition of $p'$.

\begin{theorem}\label{thm:dualschar}
Let $M$ be a semisimple MV-algebra and $Y$ be a compact Hausdorff space. Then $\ext \calP(M,C(Y))\neq \emptyset$ and the following assertions are equivalent for every $p\in \calP(M,C(Y))$.
\begin{enumerate}
\item $p\in \ext \calP(M,C(Y))$.
\item $p'\in \ext \calP'(M,C(Y))$.
\item $p$ is an MV-homomorphism.
\item $p'[Y]\subseteq \ext \St M$.
\item There is a unique continuous map $f\colon Y \to \Max M$ satisfying the equality $p(a)=a^{*}\circ f$, for all $a\in M$.
\end{enumerate}
\end{theorem}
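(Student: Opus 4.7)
I would organise the proof around the cycle $(3) \Rightarrow (4) \Rightarrow (5) \Rightarrow (3)$, and link it to the remaining conditions via the already-known equivalence $(1) \Leftrightarrow (3)$ from Corollary \ref{thm:extremestate} and the observation that $(1) \Leftrightarrow (2)$ because $p \mapsto p'$ is an affine isomorphism, as remarked after \eqref{eq:duals}. The nonemptiness statement $\ext\calP(M,C(Y))\neq\emptyset$ is exactly Corollary \ref{cor:ext}. The implication $(3) \Rightarrow (4)$ is the easiest direction: if $p$ is an MV-homomorphism, then for each $\frakn \in \Max C(Y)$ the dual state $p'(\frakn) = p_{\frakn}$ is the composition of $p$ with the evaluation $b \mapsto b^*(\frakn)$, which is an MV-homomorphism $C(Y) \to [0,1]$; hence $p_{\frakn}$ is an MV-homomorphism $M \to [0,1]$, and \eqref{eq:stspacemax} places it in $\ext \St M$.

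For $(4) \Rightarrow (5)$ I would exploit \eqref{eq:stspacemax}, which provides a continuous bijection $\varphi \colon \Max M \to \ext \St M$ between compact Hausdorff spaces, hence a homeomorphism. Setting $f \coloneqq \varphi^{-1} \circ p'$ produces a continuous map $Y \to \Max M$ (using continuity of $p'$ from Lemma \ref{lem:duals}(1)), and for every $a \in M$ and $y \in Y$ one computes $p(a)(y) = p'(y)(a) = s_{f(y)}(a) = a^*(f(y))$. Since $M^*$ separates the points of $\Max M$ (Theorem \ref{thm:Holder}), this $f$ is uniquely determined by the equation $p(a) = a^* \circ f$. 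Conversely, for $(5) \Rightarrow (3)$ one writes $p$ as the composite of the MV-homomorphism ${}^* \colon M \to C(\Max M)$ (Theorem \ref{thm:Holder}) with the map $b \mapsto b \circ f \colon C(\Max M) \to C(Y)$, which is itself an MV-homomorphism because the MV-operations in $C(X)$ are defined pointwise and $f$ is continuous.

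The only delicate point I foresee is the implicit identification $Y \cong \Max C(Y)$ that lets us identify $p(a)(y)$ with $p(a)^*(y)$ and hence with $p'(y)(a)$; this is legitimate because $C(Y)$ is semisimple, so by Theorem \ref{thm:Holder} the map ${}^*$ on $C(Y)$ is an isomorphism onto a separating subalgebra of $C(\Max C(Y))$. No deeper obstacle is expected, since all the heavy lifting — the extension of probability maps to positive operators, the Ellis--Phelps-based description of extreme operators behind Corollary \ref{thm:extremestate}, and the description \eqref{eq:stspacemax} of $\ext \St M$ — has already been carried out in the earlier sections.
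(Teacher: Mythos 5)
Your proposal is correct and follows essentially the same route as the paper: nonemptiness via Corollary \ref{cor:ext}, the equivalences $(1)\Leftrightarrow(2)$ and $(1)\Leftrightarrow(3)$ from the affine isomorphism $p\mapsto p'$ and Corollary \ref{thm:extremestate}, and the remaining equivalences from \eqref{eq:stspacemax}, the homeomorphism $\Max M\cong\ext\St M$, and the tacit identification $Y\cong\Max C(Y)$. The only cosmetic difference is that you close the loop as $(3)\Rightarrow(4)\Rightarrow(5)\Rightarrow(3)$, whereas the paper proves $(4)\Rightarrow(3)$ directly and dispatches $(4)\Leftrightarrow(5)$ in one line from \eqref{eq:duals}; the substance is identical.
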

\begin{proof}
By Corollary \ref{cor:ext}, $\ext \calP(M,C(Y))\neq \emptyset$.
 Since the map $p\mapsto p'$ is an~affine isomorphism of $\calP(M,C(Y))$ onto $\calP'(M,C(Y))$, claims (1) and (2) are equivalent. The equivalence of (1) and (3) follows from Corollary \ref{thm:extremestate}.  We~prove that (3) implies (4). Let $p\colon M\to C(Y)$ be an MV-homomorphism. By \eqref{eq:stspacemax} we need only show that $p'(y)$ is an MV-homomorphism $M\to [0,1]$, for all $y\in Y$. But this is a straightforward consequence of the assumption and the definition of $p'$. Conversely, let (4) be true and we want to conclude that~$p$ is an MV-homomorphism. For all $a,b\in M$ and every $y \in Y$, the~definition of $p'$ yields
\[
p(a\oplus b)(y)=p'_{y}(a\oplus b) =p'_{y}(a) \oplus p'_{y}(b) = p(a)(y) \oplus p(b)(y).
\]
Since $p$ is a probability map and satisfies $p(a\oplus b)=p(a)\oplus p(b)$, it is an~MV-homomorphism. The equivalence of (4) and (5) follows from \eqref{eq:duals} and the~fact that $\ext \St M$ is homeomorphic to $\Max M$. 
\end{proof}

\subsection{Codomain $N=$ Affine representation of $M$}
Assume that $M$ is a semisimple MV-algebra and $N\coloneqq M_{\mathrm{Aff}}$, where $M_{\mathrm{Aff}}$ is as in Example \ref{ex:affine}. Since $M_{\mathrm{Aff}}$ is a convex subset of $\R^{\St M}$, the set $\calP(M,M_{\mathrm{Aff}})$ is convex as well. As $M_{\mathrm{Aff}}$ is a separating subalgebra of $C(\St M)$, the space $\Max M_{\mathrm{Aff}}$ is homeomorphic to $\St M$, so we may consider dual maps as mappings $\St M \to \St M$. For any $p\in \calP(M,M_{\mathrm{Aff}})$, it is easy to see that the dual map $p'\colon \St M \to \St M$ is  continuous (Lemma \ref{lem:duals})  and affine. Conversely, let a map $f\colon  \St M \to \St M$ be affine continuous. Then the map $p_{f}\colon M\to M_{\mathrm{Aff}}$ defined by $p_{f}(a)=\hat{a}\circ f$, for all $a\in M$, is a probability map satisfying $p_{f}'=f$. We can conclude
\begin{equation*}
\calP'(M,M_{\mathrm{Aff}}) = \{f \mid \text{$f\colon \St M \to \St M$ is affine continuous}\},
\end{equation*}
and the convex sets $\calP(M,M_{\mathrm{Aff}})$ and $\calP'(M,M_{\mathrm{Aff}})$ are affinely isomorphic.

\begin{remark}
The reader familiar with the construction of state space functor in \cite[Chapter 6]{Goodearl86} might point out an alternative notion of dual map. Namely consider a~probability map $p\colon M\to N$ and a map $p^{\star}\colon \St N\to \St M$ defined by precomposing with $p$. Specifically,
\[
p^{\star}(s) \coloneqq s\circ p, \quad \text{for all $s\in \St N$.}
\]
Since $\Max N$ and $\ext \St N$ are homeomorphic under $\frakn \mapsto s_{\frakn}$, where $s_{\frakn}$ is as~in~\eqref{def:statemorph}, we get
\[
p^{\star}(s_{\frakn})(a) = s_{\frakn}(p(a)) =p(a)^{*}(\frakn)=p'_{\frakn}(a),
\]
for all $a\in M$. Hence, $p^{\star}(s_{\frakn})=p'(\frakn)$, for any $\frakn\in \Max N$. Since $p'$ is a~continuous function that coincides with $p^{\star}$ over $\ext \St N$ and $p^{\star}$ is clearly an~affine continuous functions, then Lemma \ref{lem:affext} says that $p^{\star}$ is the unique extension of~$p'$ over the entire state space $\St N$. Therefore the map $p'$, which is defined on a less complicated domain than $p^{\star}$, contains the same amount of~information about $p$ as $p^{\star}$.

\end{remark}

\section{Further research}\label{sec:conc}
It is clear from Section \ref{sec:IR} that one cannot obtain a general integral representation theorem for any probability map $M\to N$, even though both $M$ and~ $N$ are relatively well-behaved. However, it makes sense to ask whether the~conclusion of Theorem \ref{thm:intvecmeas} can be extended beyond the assumptions on~$M$ and~$N$ stated in that theorem. Specifically, \emph{find the conditions on a~probability map $p\colon M\to N$ which guarantee that $p$ extends to a weakly compact positive operator.}

The material presented in Section \ref{sec:Adj} leads to the following question. \emph{Given MV-algebras $M$ and $N$, can we characterise duals of probability maps $M\to N$ as continuous functions $\Max N\to \St M$ satisfying special properties?} It is known that this problem has a solution only in special cases, such as when $M$, $N$ are finitely presented algebras and probability maps are homomorphisms $M\to N$. Indeed, it follows from   \cite{MarraSpada12} and Theorem \ref{thm:dualschar} that the duals of such homomorphisms are precisely continuous piecewise linear maps $\Max N\to \Max M$ with integer coefficients. Therefore, one might at least ask if  the duals of probability maps $M\to N$ between two finitely presented MV-algebras can be characterised as a special class of continuous functions $\Max N\to \St M$.

\appendix

\section{Compact convex sets}\label{sec:coco}
The book \cite{Alfsen:71} is a classical reference concerning compact convex sets in infinite-dimensional spaces and integral representation theorems. Since the importance of this theory for classifying state spaces of partially ordered Abelian groups cannot be overestimated, the interested reader might also want to~check \cite[Chapter 5]{Goodearl86}. See \cite{Holmes75} for all the unexplained terminology and results related to functional analysis and basic convexity.

In this section we assume that $K$ and $L$ are real locally convex Hausdorff spaces. Let $A\subseteq K$ and $B\subseteq L$ be convex sets. A map $f\colon A\to B$ is \emph{affine} when $f(\alpha x + (1-\alpha)y)=\alpha f(x) + (1-\alpha)f(y)$, for every $x,y \in A$ and all $\alpha \in [0,1]$. If $f$ is affine and bijective, then $f^{-1}$ is affine as well, and we say that $f$ is an \emph{affine isomorphism}. Assume that the convex sets $A$ and $B$ are in addition compact. Then $f\colon A\to B$ is an \emph{affine homeomorphism} if $f$ is an~affine isomorphism and a homeomorphism.

If $A$ is a nonempty compact subset of $K$, then it has at least one extreme point. The set $\ext A$ of all extreme points of $A$ is called an \emph{extreme boundary}. Every compact convex set is generated by its extreme boundary in~the following sense.

\begin{KM}
Let $A\subseteq K$ be a nonempty compact convex set. Then $A$ is the closed convex hull of its extreme boundary,
\[
A = \overline{\conv} \ext A.
\]
\end{KM}

\begin{example}[Probability measures]\label{ex:PMs}
Let $X$ be a compact Hausdorff topological space. By $\Delta(X)$ we denote the convex set of all regular Borel probability measures on $X$.  The set $\Delta(X)$ becomes a compact subset of the locally convex space of all regular Borel probability measures on $X$, which is endowed with the weak$^{*}$-topology. In this topology, a net $(\mu_{\gamma})$ in $\Delta(X)$ converges to~$\mu\in \Delta(X)$  if and only if $\lim_{\gamma}\int_X f\;\mathrm{d}\mu_{\gamma}=\int_{X} f\;\mathrm{d}\mu$, for every continuous function $f\colon X\to \R$. Given $x\in X$, let $\delta_{x}$ be the corresponding Dirac measure. Then
\[
\ext \Delta(X) = \{\delta_{x} \mid x\in X\}
\]
and the map $x\in X \mapsto \delta_{x}\in \ext \Delta(X)$ is in fact a homeomorphism. By~Krein-Milman theorem the set of all convex combinations of Dirac measures is dense in $\Delta(X)$.
\end{example}

There are several caveats that concern usability of Krein-Milman theorem. In practical situations one is mostly interested in a workable description of the extreme boundary $\ext A$ and its measure-topological properties (compactness or measurability). Moreover, easy examples from a finite-dimensional space show that even if $A$ is a convex polytope (so necessarily $\ext A$ is finite and $A=\conv \ext A$ by Minkowski theorem), there can be many convex combinations representing given $x\in A$. The compact convex sets, whose properties are on the contrary very convenient, are particular infinite-dimensional simplices called Bauer simplices. We will spell out the definition of Bauer simplex using one of their equivalent characterisations from \cite[Theorem II.4.1]{Alfsen:71}. To~this end, we define $\Aff(A)$ to be the linear space of all real-valued affine continuous functions on a convex set $A\subseteq K$.

Let $\emptyset \neq A\subseteq K$ be compact convex. We say that $x\in A$ is the \emph{barycenter} of $\mu\in \Delta(A)$ if this condition holds true:
\[
f(x) = \int_{A} f\;\mathrm{d}\mu, \qquad \text{for all $f\in \Aff(A)$.}
\]
Clearly, every point $x\in A$ is the barycenter of at least one measure $\mu\in \Delta(A)$ since it suffices to take $\mu\coloneqq \delta_{x}$. Conversely, it is not difficult to show that, for every $\mu\in \Delta(A)$, there is precisely one point $x\in A$ that is the barycenter of $\mu$. The~\emph{barycenter mapping} is  \[r_{A}\colon \Delta(A) \to A\] assigning to every $\mu\in \Delta(A)$ its unique barycenter $r_{A}(\mu)\in A$. The barycenter mapping is surjective (since $x=r_{A}(\delta_{x})$ for any $x\in A$), affine, and continuous.

\begin{definition}
Let $A$ be a compact convex subset of $K$. Then $A$ is called a~\emph{Bauer simplex} if, for every $x\in A$, there is a unique $\mu\in\Delta(A)$ supported by $\overline{\ext A}$, and such that $x$ is the barycenter of $\mu$, that is, $r_{A}(\mu)=x$.
\end{definition}
\noindent
The extreme boundary of a Bauer simplex is necessarily closed, $\overline{\ext A} =\ext A$. Any finite-dimensional simplex is a Bauer simplex and so is the set of~probability measures $\Delta(X)$ from Example \ref{ex:PMs}. In the paper we make use of~the~ following fact, an abstract variant of the~Dirichlet problem of~the~extreme boundary. Although the statement is implicit in many textbooks, we provide its standalone proof.

\begin{lemma}\label{lem:affext}
Let $K$ and $L$ be real locally convex Hausdorff spaces, $A\subseteq K$ be a Bauer simplex, and $B\subseteq L$ be compact convex. If $f\colon \ext A \to B$ is continuous, then there is a unique affine continuous function $\bar{f}\colon A \to B$ such that $\bar{f}(x)=f(x)$, for all $x\in \ext A$.
\end{lemma}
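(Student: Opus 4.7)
The plan is to construct $\bar{f}$ by pushing representing measures forward and taking barycenters in $B$. For each $x \in A$ the Bauer simplex hypothesis supplies a unique $\mu_{x} \in \Delta(A)$ supported on $\overline{\ext A} = \ext A$ with $r_{A}(\mu_{x}) = x$; regarding $\mu_{x}$ as an element of $\Delta(\ext A)$, the continuous map $f$ induces a pushforward $f_{*}\mu_{x} \in \Delta(B)$ via $f_{*}\mu_{x}(E) = \mu_{x}(f^{-1}(E))$. I would then set
\[
\bar{f}(x) \coloneqq r_{B}(f_{*}\mu_{x}),
\]
which lies in $B$ because the barycenter map $r_{B}\colon \Delta(B) \to B$ takes values in $B$.

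Three checks will establish all the properties except continuity. If $x \in \ext A$, uniqueness forces $\mu_{x} = \delta_{x}$, hence $f_{*}\mu_{x} = \delta_{f(x)}$ and $\bar{f}(x) = f(x)$, so $\bar{f}$ extends $f$. For $x,y \in A$ and $\alpha \in [0,1]$, the combination $\alpha \mu_{x} + (1-\alpha)\mu_{y}$ is supported on $\ext A$ with barycenter $\alpha x + (1-\alpha)y$; uniqueness identifies it with $\mu_{\alpha x + (1-\alpha)y}$, and affineness of pushforward and of $r_{B}$ then yields affineness of $\bar{f}$. Uniqueness of the extension will follow from the Krein--Milman theorem: any two affine continuous maps $A \to B$ agreeing on $\ext A$ must agree on $\conv \ext A$ by affineness, and thus on $A = \overline{\conv \ext A}$ by continuity.

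The decisive step is continuity of $\bar{f}$, which I would obtain by first showing that $x \mapsto \mu_{x}$ is continuous from $A$ to $\Delta(\ext A)$. The restriction $r\colon \Delta(\ext A) \to A$ of the barycenter map is continuous and affine, surjective by the existence clause in the definition of Bauer simplex and injective by the uniqueness clause; since $\ext A$ is compact (the extreme boundary of a Bauer simplex is closed, as noted in the excerpt), $\Delta(\ext A)$ is weak$^{*}$-compact, and $A$ is Hausdorff, so $r$ is a continuous bijection between a compact space and a Hausdorff space, hence a homeomorphism. Consequently $x \mapsto \mu_{x} = r^{-1}(x)$ is continuous; composing with the weak$^{*}$-continuous pushforward $\mu \mapsto f_{*}\mu$ (continuity holds because $\int g \,\mathrm{d}f_{*}\mu = \int (g \circ f)\,\mathrm{d}\mu$ for every $g \in C_{\R}(B)$) and with $r_{B}$ delivers continuity of $\bar{f}$. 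The main obstacle is precisely this continuity argument; once the homeomorphism $\Delta(\ext A) \cong A$ furnished by the Bauer simplex hypothesis is in hand, everything else is routine bookkeeping.
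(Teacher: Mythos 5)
Your proposal is correct and follows essentially the same route as the paper: define $\bar{f}(x)=r_{B}(f_{*}\mu_{x})$ by pushing forward the unique representing measure and taking its barycenter in $B$, verify extension, affinity, and continuity, and get uniqueness from Krein--Milman. The only difference is cosmetic: where the paper cites Alfsen for the affine homeomorphism $\Delta(\ext A)\cong A$ via the barycenter map, you prove it directly as a continuous bijection from a compact space onto a Hausdorff space, which is a perfectly valid (and self-contained) substitute.
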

\begin{proof}
By \cite[Corollary II.4.2]{Alfsen:71}, every Bauer simplex $A$ is affinely homeomorphic to $\Delta(\ext A)$ via   the barycenter mapping $r_{A}\colon \Delta(\ext A) \to A$. Let $\mu_{x} \coloneqq r^{-1}_{A}(x)$ be the unique representing probability measure for $x\in A$. Let $f\colon \ext A \to B$ be a continuous map. Define $f_{\sharp}\mu_{x} \coloneqq \mu_{x}\circ f^{-1}$ and~notice that $f_{\sharp}\mu_{x} \in \Delta(B)$. Hence, it is sensible to define
\[
\bar{f}(x) \coloneqq r_{B}(f_{\sharp}\mu_{x}), \qquad x\in A,
\]
where $r_{B}\colon \Delta(B)\to B$ is the barycenter mapping on $B$. We will check that $\bar{f}$ is the desired extension of $f$.

If $x\in \ext A$, then $\mu_{x}=\delta_{x}\in \ext \Delta(\ext A)$, and
\[
\bar{f}(x)=r_{B}(f_{\sharp}\delta_{x})=r_{B}(\delta_{f(x)})=f(x).
\]

The mapping $\bar{f}$ is affine. To show this, let $x,y\in A$, $\alpha \in [0,1]$, and put $z\coloneqq \alpha x + (1-\alpha)y$. Then, by affinity of barycenter maps, we get
\begin{align*}
\bar{f}(z)& =r_{B}(f_{\sharp}\mu_{z})=r_{B}(f_{\sharp}(\alpha \mu_{x} + (1-\alpha)\mu_{y}) ) \\
&=\alpha r_{B}(f_{\sharp}\mu_{x}) + (1-\alpha)r_{B}(f_{\sharp}\mu_{y})=\alpha \bar{f}(x) + (1-\alpha)\bar{f}(y).
\end{align*}

To prove continuity of $\bar{f}$, consider a net $(x_{\gamma})$ converging to some $x$ in~$A$. Since $r_{A}$ is a homeomorphism, this gives $\mu_{x_{\gamma}} \to \mu_{x}$ in $\Delta(\ext A)$. Hence, $f_{\sharp}\mu_{x_{\gamma}} \to f_{\sharp}\mu_{x}$ in $\Delta(B)$. By continuity of $r_{B}$ this implies 
\[
\bar{f}(x_{\gamma})=r_{B}(f_{\sharp}\mu_{x_{\gamma}}) \to r_{B}( f_{\sharp}\mu_{x})=\bar{f}(x).
\]

Let $g\colon A\to B$ be an affine continuous map such that $g(x)=f(x)$, for all $x\in \ext A$. We want to conclude that $g(x)=\bar{f}(x)$, for every $x\in A$. Proceeding by cases, first assume $x\in \ext A$. By the assumption, $g(x)=f(x)=\bar{f}(x)$. Now, let $x\coloneqq \sum_{i=1}^{n}\alpha_{i} x_{i}$, for some positive integer $n$, where $x_{i}\in \ext A$ and~$\alpha_{i}\geq 0$ with $\sum_{i=1}^{n}\alpha_{i}=1$. By affinity of $f$ and $g$ and by the assumption,
\[
\bar{f}(x)=\sum_{i=1}^{n}\alpha_{i} \bar{f}(x_{i})=\sum_{i=1}^{n}\alpha_{i} g(x_{i})=g(x).
\]
 Krein-Milman theorem says that the only remaining case is when $x\in A$ is the limit of a net $(x_{\gamma})$ in $A$, where each $x_{\gamma}$ is a finite convex combination of~extreme points. Then, by continuity of $\bar{f}$ and $g$ together with what we have proved above,
\[
\bar{f}(x)=\lim_{\gamma} \bar{f}(x_{\gamma})=\lim_{\gamma} g(x_{\gamma})=g(x).
\]
This finishes the proof. 
\end{proof}

\subsection*{Acknowledgment}

 The author is grateful to Prof. Vincenzo Marra (University of Milan) for many suggestions and inspiring comments.


\end{document}